%%%%%%%%%%%%%%%%%%%%%%%%%%%%%%%%%%%%%% LAST MODIFICATION, EDUARDO, JULY 29th, 2017
\documentclass[10pt]{amsart}%
\usepackage{eurosym}
\usepackage[mathscr]{eucal}
\usepackage{graphicx,amscd,color,amsmath,amsfonts,amssymb}
\usepackage{amsmath}
\usepackage{amsfonts}
\usepackage{amssymb}
\usepackage{graphicx}%
\setcounter{MaxMatrixCols}{30}
%TCIDATA{OutputFilter=latex2.dll}
%TCIDATA{Version=5.50.0.2890}
%TCIDATA{LastRevised=Monday, July 17, 2017 04:34:23}
%TCIDATA{<META NAME="GraphicsSave" CONTENT="32">}
%TCIDATA{<META NAME="SaveForMode" CONTENT="1">}
%TCIDATA{BibliographyScheme=Manual}
%BeginMSIPreambleData
\providecommand{\U}[1]{\protect\rule{.1in}{.1in}}
%EndMSIPreambleData

\newtheorem{theorem}{Theorem}
\newtheorem{proposition}[theorem]{Proposition}
\newtheorem{corollary}[theorem]{Corollary}
\newtheorem{example}[theorem]{Example}

\newtheorem{lemma}[theorem]{Lemma}

\numberwithin{equation}{section}

\begin{document}
	\title[Geometry of multilinear forms]{On the geometry of multilinear forms}
	\author{W.V. Cavalcante} 
	\author{D.M. Pellegrino}  
	\author{E.V. Teixeira}
	\address{Departamento de Matem\'{a}tica, 
		\indent Universidade Federal de Pernambuco,
		\indent 50.740-560 - Recife, PE, Brazil.}
	\address{
		Departamento de Matem\'{a}tica, 
		\indent Universidade Federal da Para\'{\i}ba, 
		\indent 58.051-900 - Jo\~{a}o Pessoa, Brazil.}
	\address{
		Department of Mathematics, 
		\indent University of Central Florida, 
		\indent Orlando, FL, USA 32816}
	
	\email{wasthenny@dmat.ufpe.br}
	\email{pellegrino@pq.cnpq.br}
	\email{Eduardo.Teixeira@ucf.edu}
	
	\thanks{Mathematics Subject Classification (2010): 42B08, 42A05, 46L05}
	\thanks{Authors acknowledge support from Capes and CNPq-Brazil.}
	\keywords{Extremal points; multilinear forms; Grothendieck's constants}
	
	\begin{abstract}
	{We develop a constructive process which determines all extreme points of the
	unit ball of the space of $m$--linear forms, $m\geq1.$ Our method
	provides a full characterization of the geometry of that space through
	finitely many elementary steps, and thus it can be extensively applied in both computational and theoretical problems.}
		
	\end{abstract}
	\maketitle
	\tableofcontents

\section{Introduction}

Mathematical models involving multilinear forms are abundant in applied
sciences, in particular multivariable polynomials represent an endless source
of examples of such matter.  

It is often that intrinsic difficulties in
understanding multilinear problems are manifestations of the geometry complexity of the
space of multilinear forms. As a way of example, we mention the problem of finding sharp constants in
classical multilinear, convex inequalities. Routine applications of the
Krein-Milman Theorem often reduces the candidate set to the extreme points, thus, genuine difficulties in determining sharp
constants heavily rely on the lack of understanding upon the geometry of the
space of multilinear operators.

This is a critical issue resting in the core of pure and applied mathematical
analysis. Previous works on this theme include \cite{cobos, ggg3, ggg};
however up-to-date, only problems involving low dimensions and/or low degrees
have been successfully investigated; see also \cite{bourgain, ggg2, kad} for
related issues. In this article we tackle the problem in full generality.

Let $B_{\mathbb{R}^{n}}$ denote the closed unit ball of $\mathbb{R}^{n}$,
endowed with the $\sup$ norm. We denote the space of all $m$--linear forms
$T\colon\mathbb{R}^{n}\times\cdots\times\mathbb{R}^{n}\mathbb{\rightarrow R}$
by $\mathcal{L}\left(  ^{m}\mathbb{R}^{n}\right)  $. As usual, we equip this
vector space with the norm
\begin{equation}
\left\Vert T\right\Vert :=\sup_{\left\Vert x_{1}\right\Vert ,...,\left\Vert
x_{n}\right\Vert \leq1}|T(x_{1}...,x_{n})|.\label{norma222}%
\end{equation}
The closed unit ball of $\mathcal{L}\left(  ^{m}\mathbb{R}^{n}\right)  $ will
be denoted by $K$, i.e.,
\[
K:=\left\{  T\colon\mathbb{R}^{n}\times\overset{m}{\cdots}\times\mathbb{R}%
^{n}\rightarrow\mathbb{R}:T\text{ is }m\text{-linear and }\left\Vert
T\right\Vert \leq1\right\}  .
\]

The key objective of this paper is to thoroughly characterize the geometry of
$K$, by establishing all of its extreme points, henceforth denoted by
$\mathcal{C}_{m,n}$ or simply by  $\mathcal{C}$. We
describe a procedure involving only finite elementary steps to determine
$\mathcal{C}$. A particularly interesting inference from this process is that
the coordinates of the elements of $\mathcal{C}$ are all rational points. In
the sequel, we investigate optimization problems in classical real
inequalities with the aid of our main characterization theorem. The examples
included here have been nfluenced by the authors' personal taste; however it is
clear that our approach can be applied to a very large class of optimization problems.

The paper is organized as follows. In Section \ref{sct pre} we gather some
preliminary tools and discuss notations to be used throughout the whole
article. In Section \ref{sct geo} we obtain the main results of the paper,
namely Theorem \ref{arr} and Theorem \ref{arrimp}, which determine all extreme
points of the closed unit ball of the space of $m$-linear forms in arbitrary
dimensions. In Section \ref{CPPP} we discuss the algorithm inferred from the
proofs delivered in the previous Section. Applications of the main results in
the investigation of sharp constants in classical real inequalities are
discussed in the last Section 5.

\section{Preliminary results and notations}

\label{sct pre}

As previously commented, throughout the paper, $\mathbb{R}^{n}$ will always be
equipped with the $\sup$ norm, unless mentioned otherwise. Following classical
notations, given a matrix $M$, its transpose is denoted by $M^{t}$. The set
$\{1,\ldots,n\}$ will be denoted by $[n]$. For $x_{1},x_{2},\ldots,x_{m}%
\in\mathbb{R}^{n}$ and $\mathbf{j}=(j_{1},\ldots,j_{m})\in\lbrack n]^{m},$ we
define
\begin{equation}
x^{\mathbf{j}}:=%
%TCIMACRO{\dprod \limits_{i=1}^{m}}%
%BeginExpansion
{\displaystyle\prod\limits_{i=1}^{m}}
%EndExpansion
x_{i}^{(j_{i})}\in\mathbb{R}, \label{9d}%
\end{equation}
where $x_{i}^{(j_{i})}$ denotes the $j_{i}$-th coordinate of vector $x_{i}$.
We also define
\begin{equation}
\omega(x):=(x^{\mathbf{j}})_{\mathbf{j}\in\lbrack n]^{m}}\in\mathbb{R}^{n^{m}%
}, \label{9dd}%
\end{equation}
using the lexicographic order.

If $m,n$ are positive integers, let us set
\begin{equation}
V_{m}^{n}:=\left\{  \omega(x):x=\left(  x_{1},x_{2},\ldots,x_{m}\right)
\text{ and }x_{i}\in ext\left(  B_{\mathbb{R}^{n}}\right)  \text{ for all
}i\in\lbrack m]\right\}  . \label{9ddd}%
\end{equation}

Finally, we recall that given a vector space $E$ and a convex set $A\subseteq
E$, a vector $x\in A$ is said to the an extreme point of $A$ if $y,z\in A$
with $x=\left(  y+z\right)  /2$ implies $y=z.$ From now on $ext\left(
A\right)  $ denotes the set of extreme points of $A.$

\subsection{Bases of vertices of hypercubes}

We start off by proving some basic facts about $ext\left(  B_{\mathbb{R}^{n}%
}\right)  $ that will be useful later.

\begin{lemma}
\label{km}(Minkowski/Krein-Milman) If $E$ is a locally convex space and $K$ is
a nonempty convex and compact subset of $E,$ then $K$ has at least one extreme
point and $K=conv(extK)$, where $extK$ is the set of all extreme points of $K$
and $conv(A)$ denotes the closed convex hull of $A$.
\end{lemma}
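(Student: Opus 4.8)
This is the classical Krein--Milman theorem, so my plan is to run its standard proof, whose two engines are the Hahn--Banach separation theorem for locally convex spaces and Zorn's Lemma. I would call a nonempty closed set $F\subseteq K$ an \emph{extremal subset} of $K$ if $x,y\in K$ together with $tx+(1-t)y\in F$ for some $t\in(0,1)$ force $x,y\in F$. Two elementary observations will be used throughout: (a) any extreme point of an extremal subset $F$ is automatically an extreme point of $K$; and (b) for any continuous linear functional $\Lambda$ on $E$ and any extremal subset $F$, the set $F_{\Lambda}:=\{z\in F:\Lambda(z)=\max_{w\in F}\Lambda(w)\}$ is again an extremal subset of $K$, being nonempty by compactness of $F$, closed by continuity of $\Lambda$, and extremal as an immediate consequence of the extremality of $F$.

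The first step is to show that every extremal subset $F_{0}$ of $K$ contains an extreme point of $K$; the choice $F_{0}=K$ then yields $extK\neq\emptyset$ in particular. I would order the family of all extremal subsets of $K$ contained in $F_{0}$ by reverse inclusion. Every chain in this family has the intersection of its members as an upper bound: that intersection is closed, nonempty by the finite intersection property and compactness of $K$, and again extremal. Zorn's Lemma then delivers a minimal extremal subset $F\subseteq F_{0}$. Were $F$ to contain two distinct points, local convexity would produce a continuous linear functional $\Lambda$ non-constant on $F$, whence $F_{\Lambda}$ is a proper nonempty closed extremal subset strictly contained in $F$ --- contradicting minimality. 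Hence $F$ is a singleton $\{e\}$, and by observation (a), $e\in extK$.

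To obtain $K=conv(extK)$, I note that $L:=conv(extK)\subseteq K$ because $K$ is closed and convex and contains $extK$, and I argue the reverse inclusion by contradiction: suppose some $x_{0}\in K$ lies outside $L$. Since $L$ is closed and convex while $\{x_{0}\}$ is compact, the Hahn--Banach separation theorem gives a continuous linear functional $\Lambda$ and a scalar $c$ with $\Lambda(y)\leq c<\Lambda(x_{0})$ for all $y\in L$. Put $\alpha:=\max_{z\in K}\Lambda(z)$, which exists by compactness of $K$ and satisfies $\alpha\geq\Lambda(x_{0})>c$. Then $F:=\{z\in K:\Lambda(z)=\alpha\}$ is an extremal subset of $K$, so by the first step it contains some $e\in extK\subseteq L$; but then $\alpha=\Lambda(e)\leq c<\alpha$, which is absurd. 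Therefore $K=conv(extK)$.

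I expect the only genuinely nontrivial input to be the Hahn--Banach separation theorem --- precisely the place where local convexity is indispensable and where the statement would fail for a general topological vector space. Everything else is bookkeeping: verifying that the two moves ``intersect a chain of extremal subsets'' and ``pass to the $\Lambda$-maximal face'' preserve extremality, and invoking compactness at each spot where a maximum must be attained or a decreasing family of closed sets must remain nonempty.
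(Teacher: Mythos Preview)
Your argument is correct --- it is the textbook proof of the Krein--Milman theorem via Zorn's Lemma applied to extremal subsets and Hahn--Banach separation. There is nothing to compare against, however: the paper merely states this lemma as a classical result and gives no proof of its own.
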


\begin{lemma}
\label{yyy}There exists a basis of $\mathbb{R}^{n}$ composed by vectors from
$ext\left(  B_{\mathbb{R}^{n}}\right)  $.
\end{lemma}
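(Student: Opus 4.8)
The plan is to combine the explicit description of $ext\left(B_{\mathbb{R}^{n}}\right)$ with an elementary linear-algebra argument. Since $\mathbb{R}^{n}$ carries the $\sup$ norm, its closed unit ball is the cube $[-1,1]^{n}$, and a routine check shows that $ext\left(B_{\mathbb{R}^{n}}\right)$ is precisely the set of its $2^{n}$ vertices, i.e.\ the vectors all of whose coordinates equal $\pm 1$: if some coordinate of $v\in[-1,1]^{n}$ lies in the open interval $(-1,1)$, then $v=\tfrac12\big((v+\varepsilon e_{j})+(v-\varepsilon e_{j})\big)$ for small $\varepsilon>0$ with both $v\pm\varepsilon e_{j}$ in $[-1,1]^{n}$, so $v\notin ext\left(B_{\mathbb{R}^{n}}\right)$; conversely, if $v$ has all coordinates $\pm1$ and $v=(y+z)/2$ with $y,z\in[-1,1]^{n}$, comparing $j$-th coordinates forces $y_{j}=z_{j}=v_{j}$ for every $j$, so $v\in ext\left(B_{\mathbb{R}^{n}}\right)$.

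Granting this, it suffices to exhibit $n$ vertices of the cube that are linearly independent. I would take $v_{1}:=(1,1,\dots,1)$ and, for $2\le k\le n$, let $v_{k}$ be the vector obtained from $v_{1}$ by flipping the sign of its $k$-th coordinate; each $v_{k}$ has all coordinates $\pm1$, hence lies in $ext\left(B_{\mathbb{R}^{n}}\right)$. To see that $\{v_{1},\dots,v_{n}\}$ is a basis, note that $v_{1}-v_{k}=2e_{k}$ for every $k\ge 2$, so $e_{2},\dots,e_{n}$ belong to $\mathrm{span}\{v_{1},\dots,v_{n}\}$, and then $e_{1}=v_{1}-(e_{2}+\dots+e_{n})$ belongs there too; thus $\mathrm{span}\{v_{1},\dots,v_{n}\}=\mathbb{R}^{n}$, and a spanning family of $n$ vectors in an $n$-dimensional space is a basis. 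Equivalently, the matrix whose rows are $v_{1},\dots,v_{n}$ has determinant $\pm 2^{\,n-1}\neq 0$, which one reads off after subtracting the first row from all the others.

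I do not expect any genuine difficulty here: the two ingredients are the identification of $ext\left(B_{\mathbb{R}^{n}}\right)$ with the set of $\pm1$-vectors (which will be recorded anyway for later use in the paper) and the linear independence of the displayed family, both entirely elementary. The only point deserving a little care is to make the choice of vertices explicit rather than appealing to a nonconstructive genericity argument, since the paper is ultimately after a constructive description.
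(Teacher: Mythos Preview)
Your proof is correct and entirely elementary, but it differs from the paper's approach, which simply invokes the Krein--Milman theorem: since $B_{\mathbb{R}^{n}}$ is the closed convex hull of its extreme points and contains a spanning set (for instance the standard basis vectors $e_{1},\dots,e_{n}$), the extreme points themselves must span $\mathbb{R}^{n}$, and therefore contain a basis. The paper's argument is a one-line deduction but is nonconstructive; your approach, by contrast, first identifies $ext\left(B_{\mathbb{R}^{n}}\right)$ as the set of $\pm 1$-vectors and then exhibits a specific basis $\{v_{1},\dots,v_{n}\}$ with an explicit verification of linear independence. Your route has the advantage of fitting the constructive spirit of the paper---and indeed the choice $v_{1}=(1,\dots,1)$ together with sign-flipped companions is exactly the kind of basis exploited later (e.g.\ $\beta=\{(1,1),(-1,1)\}$ in the planar case)---whereas the paper's proof buys brevity at the cost of constructiveness.
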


\begin{proof}
This is a direct consequence of Krein-Milman Theorem.
\end{proof}

\begin{lemma}
\label{tec} Let $v_{1},\ldots,v_{m},u_{1},\ldots,u_{m}\in\mathbb{R}^{n}$.
Then
\[
\langle\omega(v),\omega(u)\rangle=\prod\limits_{i=1}^{m}\langle v_{i}%
,u_{i}\rangle,
\]
where $v=(v_{1},\ldots,v_{m})$ and $u=(u_{1},\ldots,u_{m})$.
\end{lemma}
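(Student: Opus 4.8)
The identity $\langle\omega(v),\omega(u)\rangle=\prod_{i=1}^m\langle v_i,u_i\rangle$ is a purely algebraic statement about the expansion of a product of inner products, so the plan is to unwind both sides according to the definitions \eqref{9d} and \eqref{9dd} and check that the resulting sums coincide term by term.

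First I would write out the right-hand side as an iterated sum: since $\langle v_i,u_i\rangle=\sum_{j_i=1}^n v_i^{(j_i)}u_i^{(j_i)}$, distributing the product over the $m$ sums gives
\begin{equation}
\prod_{i=1}^m\langle v_i,u_i\rangle=\sum_{j_1=1}^n\cdots\sum_{j_m=1}^n\prod_{i=1}^m v_i^{(j_i)}u_i^{(j_i)}=\sum_{\mathbf{j}\in[n]^m}\left(\prod_{i=1}^m v_i^{(j_i)}\right)\left(\prod_{i=1}^m u_i^{(j_i)}\right).\label{rhs-expand}
\end{equation}
By the definition \eqref{9d} of $x^{\mathbf{j}}$, the two factors inside the sum are exactly $v^{\mathbf{j}}$ and $u^{\mathbf{j}}$, so the right-hand side equals $\sum_{\mathbf{j}\in[n]^m} v^{\mathbf{j}}u^{\mathbf{j}}$. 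On the other hand, by \eqref{9dd} the vectors $\omega(v)$ and $\omega(u)$ are precisely the families $(v^{\mathbf{j}})_{\mathbf{j}\in[n]^m}$ and $(u^{\mathbf{j}})_{\mathbf{j}\in[n]^m}$ listed in lexicographic order, so their Euclidean inner product in $\mathbb{R}^{n^m}$ is $\langle\omega(v),\omega(u)\rangle=\sum_{\mathbf{j}\in[n]^m} v^{\mathbf{j}}u^{\mathbf{j}}$. Comparing the two displays concludes the proof.

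There is essentially no obstacle here: the only thing to be careful about is bookkeeping of indices — making sure that the distributive expansion of the product of $m$ sums in \eqref{rhs-expand} is indexed by the same set $[n]^m$ that indexes the coordinates of $\omega(\cdot)$, and that the lexicographic ordering used in \eqref{9dd} is a fixed bijection applied identically to both $v$ and $u$, so it does not affect the value of the inner product. If one prefers, the term-by-term matching can be phrased as a short induction on $m$, with the base case $m=1$ being the ordinary definition of the inner product on $\mathbb{R}^n$ and the inductive step peeling off the last factor $\langle v_m,u_m\rangle$; but the direct computation above is cleaner and I would present it that way.
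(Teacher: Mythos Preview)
Your proof is correct and is essentially the same direct computation as the paper's: the paper starts from $\langle\omega(v),\omega(u)\rangle=\sum_{\mathbf{k}\in[n]^m}v^{\mathbf{k}}u^{\mathbf{k}}$, rewrites each summand as $(v_1^{(k_1)}u_1^{(k_1)})\cdots(v_m^{(k_m)}u_m^{(k_m)})$, and then factors the sum into the product $\prod_{i=1}^m\langle v_i,u_i\rangle$, which is your chain of equalities read in reverse.
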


\begin{proof}
One simply notices that
\begin{align*}
\langle\omega(v),\omega(u)\rangle &  =\sum_{\mathbf{k}\in\lbrack n]^{m}%
}v^{\mathbf{k}}u^{\mathbf{k}}\\
&  =\sum_{\mathbf{k}\in\lbrack n]^{m}}(v_{1}^{(k_{1})}\cdots v_{m}^{(k_{m}%
)})(u_{1}^{(k_{1})}\cdots u_{m}^{(k_{m})})\\
&  =\sum_{\mathbf{k}\in\lbrack n]^{m}}(v_{1}^{(k_{1})}u_{1}^{(k_{1})}%
)\cdots(v_{m}^{(k_{m})}u_{m}^{(k_{m})})\\
&  =\left(  \sum_{k_{1}\in\lbrack n]}v_{1}^{(k_{1})}u_{1}^{(k_{1})}\right)
\left(  \sum_{k_{2}\in\lbrack n]}v_{2}^{(k_{2})}u_{2}^{(k_{2})}\right)
\cdots\left(  \sum_{k_{m}\in\lbrack n]}v_{m}^{(k_{m})}u_{m}^{(k_{m})}\right)
\\
&  =\prod_{i=1}^{m}\langle v_{i},u_{i}\rangle.
\end{align*}

\end{proof}

\begin{proposition}
For all $i\in\lbrack m]$, let
\[
\beta_{i}=\{v_{i,1},\ldots,v_{i,n}\}
\]
be a set of non-null vectors in $\mathbb{R}^{n}$. The following assertions are equivalent:

(i) $\beta_{i}$ is a basis of $\mathbb{R}^{n}$ for all $i\in\lbrack m]$.

(ii) $\Lambda_{m}(\beta_{1},\ldots,\beta_{m}):=\{\omega(x):x=(x_{i})_{i=1}%
^{m}\in\Pi_{i\in\lbrack m]}\beta_{i}\}$ is a basis of $\mathbb{R}^{n^{m}}$.
\end{proposition}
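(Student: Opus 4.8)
The plan is to reduce both implications to two elementary facts about the map $\omega$. The first is that $\omega$ is $m$-linear: by \eqref{9d} each coordinate of $\omega(x_1,\dots,x_m)$ is a product containing exactly one coordinate from each $x_i$, hence is a linear function of $x_i$ when the other vectors are frozen, so $\omega$ is linear in each of its $m$ slots. The second is a labelling remark: writing $e_1,\dots,e_n$ for the canonical basis of $\mathbb{R}^n$, the vector $\omega(e_{j_1},\dots,e_{j_m})$ has $\mathbf{k}$-th coordinate $\prod_{i}\delta_{j_i k_i}$, so it is precisely the canonical basis vector of $\mathbb{R}^{n^m}$ indexed by $\mathbf{j}=(j_1,\dots,j_m)$ in the lexicographic order. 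I will use throughout that $\Lambda_m(\beta_1,\dots,\beta_m)$ is indexed by $[n]^m$, and therefore has at most $n^m=\dim\mathbb{R}^{n^m}$ elements.

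For $(i)\Rightarrow(ii)$, assume every $\beta_i$ is a basis. Expanding each argument in the basis $\beta_i$ and using $m$-linearity, every $\omega(x_1,\dots,x_m)$ with $x_i\in\mathbb{R}^n$ is a linear combination of the vectors $\omega(v_{1,j_1},\dots,v_{m,j_m})$, i.e.\ it belongs to $\operatorname{span}\Lambda_m$; specializing to $x_i=e_{k_i}$ and invoking the labelling remark shows that $\operatorname{span}\Lambda_m$ contains every canonical basis vector of $\mathbb{R}^{n^m}$, so $\Lambda_m$ spans $\mathbb{R}^{n^m}$. A spanning subset of cardinality at most $\dim\mathbb{R}^{n^m}$ must be a basis, which gives $(ii)$. (Alternatively, linear independence can be obtained directly from Lemma \ref{tec}: taking $w_{i,1},\dots,w_{i,n}$ to be the basis dual to $\beta_i$ with respect to the Euclidean inner product, so that $\langle v_{i,j},w_{i,k}\rangle=\delta_{jk}$, and pairing a vanishing combination $\sum_{\mathbf{j}}c_{\mathbf{j}}\,\omega(v_{1,j_1},\dots,v_{m,j_m})=0$ with $\omega(w_{1,k_1},\dots,w_{m,k_m})$ yields, via Lemma \ref{tec}, $c_{\mathbf{k}}=0$ for every $\mathbf{k}\in[n]^m$.)

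For $(ii)\Rightarrow(i)$, I argue by contraposition. Suppose some $\beta_{i_0}$ is not a basis; since it is a set of $n$ vectors in the $n$-dimensional space $\mathbb{R}^n$, it must be linearly dependent, so there are scalars $c_1,\dots,c_n$, not all zero, with $\sum_{j}c_j v_{i_0,j}=0$. Fixing arbitrary $x_i\in\beta_i$ for $i\neq i_0$ and using linearity of $\omega$ in the $i_0$-th slot gives $\sum_{j}c_j\,\omega(x_1,\dots,v_{i_0,j},\dots,x_m)=\omega(x_1,\dots,\sum_j c_j v_{i_0,j},\dots,x_m)=0$, a nontrivial relation among elements of $\Lambda_m$. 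If the $n$ vectors occurring here are pairwise distinct, this already exhibits $\Lambda_m$ as linearly dependent; if two of them coincide, then $|\Lambda_m|<n^m=\dim\mathbb{R}^{n^m}$. In either case $\Lambda_m$ fails to be a basis, completing the contrapositive.

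The proposition is essentially bookkeeping once the $m$-linearity of $\omega$ is recorded; the only slightly delicate point is the last step of $(ii)\Rightarrow(i)$, where one must rule out that a coincidence among the labelled vectors of $\Lambda_m$ could restore the basis property — which it cannot, precisely because such a coincidence forces $|\Lambda_m|$ below the dimension $n^m$. I would therefore open the proof by stating the $m$-linearity of $\omega$ and the cardinality bound $|\Lambda_m|\le n^m$ explicitly, since both are used in each direction.
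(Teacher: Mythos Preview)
Your proof is correct. For $(ii)\Rightarrow(i)$ you argue exactly as the paper does (contrapositive, linear dependence in one factor propagated by multilinearity), with slightly more care about the edge case of repeated vectors.

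For $(i)\Rightarrow(ii)$ your primary route is genuinely different. The paper proves \emph{linear independence} directly: it takes dual bases $\gamma_i=\{u_{i,1},\dots,u_{i,n}\}$ and uses Lemma~\ref{tec} to show $\langle\omega(v_{\mathbf{j}}),\omega(u_{\mathbf{i}})\rangle=\prod_s\delta_{j_s i_s}=\delta_{\mathbf{ij}}$, so the $\omega(v_{\mathbf j})$ form one side of a biorthogonal system of size $n^m$ and are therefore a basis. You instead prove \emph{spanning}: multilinearity plus the observation that $\omega(e_{j_1},\dots,e_{j_m})$ is a canonical basis vector puts all of $\mathbb{R}^{n^m}$ in $\operatorname{span}\Lambda_m$, and the cardinality bound $|\Lambda_m|\le n^m$ finishes. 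Both arguments are short; yours avoids invoking Lemma~\ref{tec} and the existence of dual bases, while the paper's avoids the separate ``labelling remark'' about canonical vectors. You do record the dual-basis argument as an alternative, which is precisely the paper's proof.
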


\begin{proof}
$(i)\Rightarrow(ii)$ For all $i\in\lbrack m]$ there is a basis $\gamma
_{i}=\{u_{i,1},\ldots,u_{i,n}\}$ of $\mathbb{R}^{n}$ satisfying
\[
\langle v_{i,r},u_{i,s}\rangle=\delta_{rs}^{i},
\]
where $\delta_{rs}^{i}$ is the Kronecker's delta and $r,s\in\lbrack n]$. Given
$\mathbf{i}=(i_{1},\ldots,i_{m})$ and $\mathbf{j}=(j_{1},\ldots,j_{m}%
)\in\lbrack n]^{m}$, consider
\[
v_{\mathbf{j}}=(v_{1,j_{1}},\ldots,v_{m,j_{m}})
\]
and
\[
u_{\mathbf{j}}=(u_{1,j_{1}},\ldots,u_{m,j_{m}}).
\]
By Lemma \ref{tec}, we have
\[
\delta_{\mathbf{ij}}:=\Pi_{s\in\lbrack m]}\delta_{j_{s}i_{s}}^{s}=\Pi
_{s\in\lbrack m]}\langle v_{s,j_{s}},u_{s,i_{s}}\rangle=\langle\omega
(v_{\mathbf{j}}),\omega(u_{\mathbf{i}})\rangle.
\]

$(ii)\Rightarrow(i)$ Suppose that (i) is not valid. Thus, there is $j_{0}%
\in\lbrack m]$ such that $\beta_{j_{0}}$ is not a basis of $\mathbb{R}^{n}$,
i.e., there is a $k_{0}\in\lbrack n]$ such that
\[
v_{j_{0},k_{0}}=\sum_{i\neq k_{0}}\alpha_{i}v_{j_{0},i}%
\]
for certain scalars $\alpha_{i},$ $i\neq k_{0}.$ Therefore it is immediate
that $\Lambda_{m}(\beta_{1},\ldots,\beta_{m})$ is not composed by linearly
independent vectors.
\end{proof}

\begin{corollary}
\label{constbas2} If $\beta$ is a basis of $\mathbb{R}^{n}$, then
\[
\Lambda_{m}(\beta):=\Lambda_{m}(\beta,\ldots,\beta)
\]
is a basis of $\mathbb{R}^{n^{m}}$.
\end{corollary}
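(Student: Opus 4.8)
The plan is to deduce the corollary directly from the Proposition by setting all of the bases equal. Concretely, suppose $\beta = \{v_1,\ldots,v_n\}$ is a basis of $\mathbb{R}^n$. For each $i \in [m]$ put $\beta_i := \beta$; then trivially $\beta_i$ is a basis of $\mathbb{R}^n$ for all $i \in [m]$, so hypothesis (i) of the Proposition is satisfied. By the implication $(i)\Rightarrow(ii)$, the set $\Lambda_m(\beta_1,\ldots,\beta_m)$ is a basis of $\mathbb{R}^{n^m}$. It remains only to observe that, by the very definition given in the statement, $\Lambda_m(\beta) = \Lambda_m(\beta,\ldots,\beta) = \Lambda_m(\beta_1,\ldots,\beta_m)$, which completes the argument.

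In more detail, unwinding the definition from the Proposition, $\Lambda_m(\beta,\ldots,\beta) = \{\omega(x) : x = (x_i)_{i=1}^m \in \prod_{i\in[m]}\beta\}$, i.e. the image under $\omega$ of all $m$-tuples whose entries are each drawn from the single basis $\beta$; this is exactly the specialization of $\Lambda_m(\beta_1,\ldots,\beta_m)$ to the case $\beta_1 = \cdots = \beta_m = \beta$, so no separate verification is needed. One might optionally remark on cardinality: since $\beta$ has $n$ elements, $\Lambda_m(\beta)$ has at most $n^m$ elements, and being a basis of the $n^m$-dimensional space $\mathbb{R}^{n^m}$ it has exactly $n^m$ elements, so in particular the $n^m$ tuples $x$ produce $n^m$ distinct vectors $\omega(x)$.

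There is essentially no obstacle here — the corollary is a pure specialization of the Proposition, and the only thing to be careful about is matching the notational conventions ($\Lambda_m(\beta)$ versus $\Lambda_m(\beta,\ldots,\beta)$), which the statement itself handles by definition. The proof is therefore a single sentence invoking the Proposition with $\beta_1 = \cdots = \beta_m = \beta$.
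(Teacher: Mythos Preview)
Your proof is correct and matches the paper's approach exactly: the corollary is stated immediately after the Proposition with no separate proof, so the intended argument is precisely the specialization $\beta_1=\cdots=\beta_m=\beta$ together with the implication $(i)\Rightarrow(ii)$, which is what you wrote.
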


From Lemma \ref{yyy} and Corollary \ref{constbas2} we have the following consequence:

\begin{corollary}
\label{coro1}There exists a basis of $\mathbb{R}^{n^{m}}$ contained in
$V_{m}^{n}.$
\end{corollary}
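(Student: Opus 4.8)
The plan is to combine the two preceding results directly. By Lemma~\ref{yyy}, there exists a basis $\beta = \{v_1, \ldots, v_n\}$ of $\mathbb{R}^n$ whose elements all lie in $\ext(B_{\mathbb{R}^n})$. Applying Corollary~\ref{constbas2} to this particular $\beta$, the set $\Lambda_m(\beta) = \Lambda_m(\beta, \ldots, \beta)$ is a basis of $\mathbb{R}^{n^m}$. It then remains to verify the inclusion $\Lambda_m(\beta) \subseteq V_m^n$, which is essentially a matter of unwinding definitions.

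For that last step, recall from \eqref{9ddd} that $V_m^n$ consists of all vectors $\omega(x)$ where $x = (x_1, \ldots, x_m)$ with each $x_i \in \ext(B_{\mathbb{R}^n})$. By definition, a typical element of $\Lambda_m(\beta)$ has the form $\omega(x)$ with $x = (x_i)_{i=1}^m \in \prod_{i \in [m]} \beta$, i.e., each $x_i \in \beta$. Since $\beta \subseteq \ext(B_{\mathbb{R}^n})$ by our choice of $\beta$, every such $x_i$ is an extreme point of $B_{\mathbb{R}^n}$, and hence $\omega(x) \in V_m^n$. This gives $\Lambda_m(\beta) \subseteq V_m^n$, and since $\Lambda_m(\beta)$ is already a basis of $\mathbb{R}^{n^m}$, we conclude that $V_m^n$ contains a basis of $\mathbb{R}^{n^m}$.

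There is no real obstacle here: the statement is a bookkeeping corollary whose entire content is packaged into Lemma~\ref{yyy} (existence of an extreme-point basis of the hypercube, itself a consequence of Krein--Milman) and Corollary~\ref{constbas2} (the tensor-type construction $\Lambda_m$ preserving the basis property). The only point requiring a moment's attention is making sure the basis used in Corollary~\ref{constbas2} is taken to be the specific one produced by Lemma~\ref{yyy}, rather than an arbitrary basis; once that choice is fixed, the membership in $V_m^n$ is automatic.
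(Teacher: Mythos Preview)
Your proof is correct and follows exactly the route the paper indicates: the paper simply states the corollary as an immediate consequence of Lemma~\ref{yyy} and Corollary~\ref{constbas2}, and you have spelled out precisely that deduction, including the routine check that $\Lambda_m(\beta)\subseteq V_m^n$ when $\beta\subseteq\ext(B_{\mathbb{R}^n})$.
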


\subsection{Some algebraic tools}

We denote by $O(n^{m})$ the set of all orthogonal $n^{m}\times n^{m}$
matrices. Given $(c_{i})_{i=1}^{m}\in\mathbb{R}^{m}$, we define $diag(c_{i}%
)_{i\in\lbrack m]}$ to be the $m\times m$ diagonal matrix whose entries are
$c_{i}.$ Let
\[
G_{m}^{n}:=\left\{  diag(x^{\mathbf{j}})_{\mathbf{j}\in\lbrack n]^{m}}\in
O(n^{m}):x=\left(  x_{i}\right)  _{i=1}^{m}\text{ and }x_{i}\in ext\left(
B_{\mathbb{R}^{n}}\right)  \text{ for all }i\in\lbrack m]\right\}  ,
\]
where $x^{\mathbf{j}}$ is as in \eqref{9dd}, and we still use the
lexicographic order.

\begin{proposition}
Let $m,n$ be positive integers. Then

(i) $G_{m}^{n}$ is a subgroup of $O(n^{m})$;

(ii) The map $\phi:G_{m}^{n}\times V_{m}^{n}\rightarrow V_{m}^{n}$ given by
\[
\phi\left(  g,\omega(x)\right)  \mapsto\omega(x)\cdot g,
\]
where $x = (x_{1},\ldots,x_{m})$ and $x_{1},\ldots,x_{m}\in ext(B_{\mathbb{R}%
^{n}})$ is well defined;

(iii) $\phi$ is a free (left) group action.
\end{proposition}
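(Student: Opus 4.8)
The plan is to verify the three assertions in turn, exploiting Lemma~\ref{tec} throughout to reduce everything to coordinatewise statements about $\ext(B_{\mathbb{R}^n})$. Recall that $\ext(B_{\mathbb{R}^n})$, for the $\sup$ norm, is exactly the set of sign vectors $\{-1,1\}^n$, which is a group under coordinatewise multiplication. For (i), I would first observe that every $M=\mathrm{diag}(x^{\mathbf j})_{\mathbf j}\in G_m^n$ is indeed orthogonal: since each $x_i$ has entries $\pm1$, each $x^{\mathbf j}=\prod_i x_i^{(j_i)}\in\{-1,1\}$, so $M$ is a diagonal $\pm1$ matrix and $M^tM=I$. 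To see $G_m^n$ is a subgroup it suffices to check closure under products and inverses. Given $x=(x_i)_i$ and $y=(y_i)_i$ with all $x_i,y_i\in\ext(B_{\mathbb{R}^n})$, set $z_i$ to be the coordinatewise product of $x_i$ and $y_i$; then $z_i\in\ext(B_{\mathbb{R}^n})$ and $z^{\mathbf j}=\prod_i z_i^{(j_i)}=\prod_i x_i^{(j_i)}y_i^{(j_i)}=x^{\mathbf j}y^{\mathbf j}$, so $\mathrm{diag}(x^{\mathbf j})\,\mathrm{diag}(y^{\mathbf j})=\mathrm{diag}(z^{\mathbf j})\in G_m^n$. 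Finally $\mathrm{diag}(x^{\mathbf j})^{-1}=\mathrm{diag}(x^{\mathbf j})$ since each entry is $\pm1$, so inverses stay in $G_m^n$, and the identity corresponds to $x_i=(1,\ldots,1)$.

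For (ii), well-definedness has two parts. First, the formula $\phi(g,\omega(x))=\omega(x)\cdot g$ must not depend on the representation of an element of $V_m^n$ as $\omega(x)$; but distinct tuples $(x_1,\ldots,x_m)$ of sign vectors give distinct $\omega(x)$ (one recovers each $x_i$ up to the ambiguity $x_i\mapsto -x_i$ balanced across the factors, and in any case the product $\omega(x)\cdot g$ is determined by $\omega(x)$ alone as a vector times a fixed matrix), so there is nothing to check beyond the output landing in $V_m^n$. Second, and this is the substantive point, I must show $\omega(x)\cdot g\in V_m^n$ whenever $g=\mathrm{diag}(y^{\mathbf j})_{\mathbf j}\in G_m^n$. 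Writing $g$ as coming from $y=(y_1,\ldots,y_m)$, the $\mathbf j$-th coordinate of $\omega(x)\cdot g$ is $x^{\mathbf j}y^{\mathbf j}=\prod_i x_i^{(j_i)}y_i^{(j_i)}=\prod_i (x_i\odot y_i)^{(j_i)}$ where $\odot$ is coordinatewise product; hence $\omega(x)\cdot g=\omega(z)$ with $z_i=x_i\odot y_i\in\ext(B_{\mathbb{R}^n})$, so it lies in $V_m^n$.

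For (iii), I need that $\phi$ is a group action (compatibility with multiplication and identity) and that it is free. Compatibility is immediate from the group structure found in (i): $\omega(x)\cdot(gh)=(\omega(x)\cdot g)\cdot h$ is just associativity of matrix multiplication, and the identity of $G_m^n$ acts trivially. Freeness means: if $\omega(x)\cdot g=\omega(x)$ for some $\omega(x)\in V_m^n$ then $g$ is the identity. With $g=\mathrm{diag}(y^{\mathbf j})_{\mathbf j}$, the equation says $x^{\mathbf j}y^{\mathbf j}=x^{\mathbf j}$ for all $\mathbf j\in[n]^m$; since $x^{\mathbf j}=\pm1\neq 0$, this forces $y^{\mathbf j}=1$ for all $\mathbf j$, i.e.\ $\mathrm{diag}(y^{\mathbf j})_{\mathbf j}$ is the identity matrix, which is precisely the identity of $G_m^n$. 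The only mild subtlety — the main thing to be careful about — is the interplay between the redundancy in writing a group element $g$ via a tuple $y$ (the map $y\mapsto \mathrm{diag}(y^{\mathbf j})$ is not injective, e.g.\ flipping the signs of two of the $y_i$ leaves $g$ unchanged) and the corresponding redundancy in $V_m^n$; but since all assertions are phrased directly in terms of the matrices $g$ and the vectors $\omega(x)$ rather than the tuples, this redundancy is harmless, and one only needs Lemma~\ref{tec} (or the direct coordinate computation above) to translate between the two pictures.
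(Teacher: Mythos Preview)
Your treatment of (i) and (ii) follows the paper's line exactly: coordinatewise products of sign vectors give closure, $g^{-1}=g$, and the verification $\omega(x)\cdot g=\omega(z)$ with $z_i=x_i\odot y_i$ shows the image lands in $V_m^n$.

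For (iii), there is a terminological mismatch that matters. You interpret ``free'' in the standard sense (trivial stabilizers) and correctly prove that $\omega(x)\cdot g=\omega(x)$ forces $g=I$. The paper, however, under the label ``free action'' actually establishes something different: \emph{transitivity}. Its argument shows that for any $\omega(x),\omega(y)\in V_m^n$ there exists $g\in G_m^n$ with $\phi(g,\omega(x))=\omega(y)$, namely $g=\mathrm{diag}(z^{\mathbf j})$ with $z_i$ the coordinatewise product of $x_i$ and $y_i$. It is this transitivity --- not the stabilizer statement --- that is invoked downstream, in Corollary~\ref{novvv} and in the proof of Theorem~\ref{arrimp} (``since $\phi$ is a free action, there exists $g$ such that $\phi(g,v_1)=u$''). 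Your stabilizer computation is correct as far as it goes, and in fact the action is both free in your sense and transitive, hence simply transitive; but as written your proof does not supply the property the rest of the paper relies on. You should add the one-line transitivity argument (which is just the same coordinatewise-product trick you already used in (ii)).
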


\begin{proof}
(i) The identity belongs to $G_{m}^{n};$ in fact, we just need to consider
$x_{1}=\cdots=x_{m}=(1,1,\ldots,1)\in ext\left(  B_{\mathbb{R}^{n}}\right)  $.
If $g\in G_{m}^{n}$, then
\[
g^{-1}=g^{t}=g\in G_{m}^{n}.
\]
Now, let us show that $G_{m}^{n}$ is closed under multiplication. Given
$g,h\in G_{m}^{n}$ there are $x_{1},\ldots,x_{m},y_{1},\ldots,y_{m}\in
ext\left(  B_{\mathbb{R}^{n}}\right)  $ such that
\[
g=diag(x^{\mathbf{j}})_{\mathbf{j}\in\lbrack n]^{m}}\text{ and }%
h=diag(y^{\mathbf{j}})_{\mathbf{j}\in\lbrack n]^{m}}.
\]
Define $z_{i}=(x_{i}^{(s)}y_{i}^{(s)})_{s\in\lbrack n]}\in ext\left(
B_{\mathbb{R}^{n}}\right)  $, $i=1,\ldots,m$. Then
\[
g\cdot h = diag(x^{\mathbf{j}}y^{\mathbf{j}})_{\mathbf{j}\in[n]^{m}%
}=diag(z^{\mathbf{j}})_{\mathbf{j}\in\lbrack n]^{m}}\in G_{m}^{n}.
\]

(ii) Now let us show that $\phi$ is well defined, i.e., $\phi$ does not depend
on the representatives and $\phi\left(  G_{m}^{n},V_{m}^{n}\right)  $ is
contained in $V_{m}^{n}$.

Let us first show that $\phi$ does not depend on the representatives. Suppose
that $g\in G_{m}^{n}$ is represented by
\[
diag(a^{\mathbf{j}})_{\mathbf{j}\in\lbrack n]^{m}}=diag(b^{\mathbf{j}%
})_{\mathbf{j}\in\lbrack n]^{m}}%
\]
where $a_{1},\ldots,a_{m},b_{1},\ldots,b_{m}\in ext\left(  B_{\mathbb{R}^{n}%
}\right)  $ and $x_{1},\ldots,x_{m},y_{1},\ldots,y_{m}\in ext\left(
B_{\mathbb{R}^{n}}\right)  $ are such that
\[
\omega(x)=\omega(y).
\]
Then
\[
a^{\mathbf{j}} = b^{\mathbf{j}}\text{ and } x^{\mathbf{j}} = y^{\mathbf{j}}
\]
for all $\mathbf{j}\in[n]^{m}$. Thus,
\[
\omega(x)\cdot diag(a^{\mathbf{j}})_{\mathbf{j}\in[n]^{m}} = (x^{\mathbf{j}%
}a^{\mathbf{j}})_{\mathbf{j}\in[n]^{m}} = (y^{\mathbf{j}}b^{\mathbf{j}%
})_{\mathbf{j}\in[n]^{m}} = \omega(y)\cdot diag(b^{\mathbf{j}})_{\mathbf{j}%
\in[n]^{m}}.
\]
We conclude that $\phi$ does not depend on the representatives.

We will show that $\omega(x)\cdot g\in V_{m}^{n}$, where $x=(x_{1}%
,\ldots,x_{m})$ and $x_{1},\ldots,x_{m}\in ext(B_{\mathbb{R}^{n}})$. If
\[
g=diag(a^{\mathbf{j}})_{\mathbf{j}\in\lbrack n]^{m}}%
\]
with $a_{1},\ldots,a_{m}\in ext\left(  B_{\mathbb{R}^{n}}\right)  $, then
\[
\omega(x)\cdot g=(x^{\mathbf{j}}a^{\mathbf{j}})_{\mathbf{j}\in\lbrack n]^{m}%
}=\omega(y),
\]
with $y_{i}=(a_{i}^{(s)}x_{i}^{(s)})_{s\in\lbrack n]}\in ext(B_{\mathbb{R}%
^{n}})$, and thus $\omega(x)\cdot g\in V_{m}^{n}$.

(iii) Let us show that $\phi$ is a group action. Let $I$ be the identity of
$G_{m}^{n}$. Then
\[
\omega(x)\cdot I=\omega(x).
\]
Moreover, given $g,h\in G_{m}^{n}$, then
\begin{align*}
\phi(g\cdot h,\omega(x))  &  =\omega(x)\cdot(g\cdot h)\\
&  =\left(  \omega(x)\cdot g\right)  \cdot h\\
&  = \phi\left(  h,\omega(x)\cdot g \right) \\
&  =\phi\left(  h,\phi\left(  g,\omega(x)\right)  \right)  .
\end{align*}
Now let us show that $\phi$ is a free action. Given $x_{1},\ldots,x_{m}%
,y_{1},\ldots,y_{m}\in ext\left(  B_{\mathbb{R}^{n}}\right)  ,$ define
\[
g=diag(z^{\mathbf{j}})_{\mathbf{j}\in\lbrack n]^{m}},
\]
where $z_{i}=(x_{i}^{(s)}y_{i}^{(s)})_{s\in\lbrack n]}$. Thus
\[
\phi(g,\omega(x))=\omega(x)\cdot g=\omega(y).
\]

\end{proof}

\begin{corollary}
\label{novvv}Given $u\in V_{m}^{n},$ there is a basis of $\mathbb{R}^{n^{m}},$
$\beta\subseteq V_{m}^{n},$ such that $u\in\beta$. More precisely, if $\gamma$
is a basis of $\mathbb{R}^{n^{m}}$ contained in $V_{m}^{n},$ then
\[
\eta=\left\{  \phi\left(  g,v\right)  :v\in\gamma\right\}
\]
is a basis of $\mathbb{R}^{n^{m}}$ contained in $V_{m}^{n}$ for all $g\in$
$G_{m}^{n}.$
\end{corollary}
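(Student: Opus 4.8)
The plan is to obtain both assertions from material already in place: Corollary~\ref{coro1}, which gives a basis of $\mathbb{R}^{n^{m}}$ contained in $V_{m}^{n}$, and the Proposition establishing that $G_{m}^{n}$ is a subgroup of $O(n^{m})$ acting freely on $V_{m}^{n}$ through $\phi$. I would split the proof into the \emph{more precise} statement (about a fixed $\gamma$ and $g$) and then deduce the existence statement from it.

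First I would prove the more precise assertion. Fix $g\in G_{m}^{n}$ and a basis $\gamma\subseteq V_{m}^{n}$ of $\mathbb{R}^{n^{m}}$. Since $g\in O(n^{m})$, it is invertible, so right multiplication by $g$ is a linear automorphism of $\mathbb{R}^{n^{m}}$; in particular it carries the basis $\gamma$ to a basis of $\mathbb{R}^{n^{m}}$, and by the definition of $\phi$ this image is exactly $\eta=\{\phi(g,v):v\in\gamma\}$. Thus $\eta$ is a basis of $\mathbb{R}^{n^{m}}$ (and has $n^{m}$ elements, no collisions occurring because $\phi(g,\cdot)$ is injective). Part (ii) of the same Proposition says that $\phi$ takes values in $V_{m}^{n}$, so $\phi(g,v)\in V_{m}^{n}$ for each $v\in\gamma\subseteq V_{m}^{n}$; hence $\eta\subseteq V_{m}^{n}$. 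This settles the second assertion.

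For the first assertion I would start from a basis $\gamma\subseteq V_{m}^{n}$ of $\mathbb{R}^{n^{m}}$, which exists by Corollary~\ref{coro1}, and pick any $v_{0}\in\gamma$ (possible since $\gamma$ is nonempty). Writing $v_{0}=\omega(x)$ and $u=\omega(y)$ with all $x_{i},y_{i}\in ext(B_{\mathbb{R}^{n}})$, the construction carried out in the proof of the freeness of $\phi$ produces $g=diag(z^{\mathbf{j}})_{\mathbf{j}\in[n]^{m}}\in G_{m}^{n}$, where $z_{i}=(x_{i}^{(s)}y_{i}^{(s)})_{s\in[n]}\in ext(B_{\mathbb{R}^{n}})$ since each of its coordinates is a product of two entries of $\{-1,1\}$, and this $g$ satisfies $\phi(g,v_{0})=\omega(y)=u$. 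Applying the already-established second assertion with this $g$, the set $\beta:=\{\phi(g,v):v\in\gamma\}$ is a basis of $\mathbb{R}^{n^{m}}$ contained in $V_{m}^{n}$, and $u=\phi(g,v_{0})\in\beta$ because $v_{0}\in\gamma$.

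There is no real obstacle in this argument; the only point deserving a moment of care is that the action $\phi$ is in fact transitive on $V_{m}^{n}$, which is slightly stronger than the freeness asserted in the Proposition. However, this is precisely what the explicit group element $g$ built in that proof delivers, so I would simply reuse that construction instead of reproving transitivity from scratch.
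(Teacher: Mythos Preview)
Your proof is correct and follows essentially the same approach as the paper's: both invoke Corollary~\ref{coro1} for an initial basis in $V_{m}^{n}$, use the group action to carry a chosen basis vector to $u$, and conclude via the invertibility of $g$ together with $\phi(G_{m}^{n},V_{m}^{n})\subseteq V_{m}^{n}$. Your remark that what is actually needed (and actually proved in the Proposition) is transitivity rather than freeness is a worthwhile clarification, but it does not change the underlying argument.
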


\begin{proof}
By Corollary \ref{coro1}, there is a basis $\varrho=\left\{  v_{1}%
,...,v_{n^{m}}\right\}  $ of $\mathbb{R}^{n^{m}}$ such that $\varrho\subseteq
V_{m}^{n}.$ Since $\phi:G_{m}^{n}\times V_{m}^{n}\rightarrow V_{m}^{n}$ is a
free (left) group action, there exists a $g_{1}\in G_{m}^{n}$ such that
\[
v_{1}\cdot g_{1}=\phi\left(  g_{1},v_{1}\right)  =u.
\]
Since
\[
v\cdot g=\phi\left(  g,v\right)  \in V_{m}^{n}%
\]
for all $g\in G_{m}^{n}$ and all $v\in V_{m}^{n}$ and since $g_{1}$ is
invertible,
\[
\beta:=\left\{  v\cdot g_{1}:v\in\varrho\right\}  =\left\{  \phi\left(
g_{1},v\right)  :v\in\varrho\right\}  \subseteq V_{m}^{n}%
\]
is a basis of $\mathbb{R}^{n^{m}}$ and obviously contains $u.$
\end{proof}

\section{The geometry of $\mathcal{L}(^{m}\mathbb{R}^{n})$}

\label{sct geo}

The main results of this section are Theorem \ref{arr} and Theorem \ref{arrimp}. They
provide an elementary constructive characterization of the extreme points of
the closed unit ball of $\mathcal{L}(^{m}\mathbb{R}^{n})$.

\subsection{The first main result}

Given a multilinear form $T\in\mathcal{L}(^{m}\mathbb{R}^{n})$, we can
represent it as%
\[
T(y)=\sum_{\mathbf{i}\in\lbrack n]^{m}}a_{\mathbf{i}}y^{\mathbf{i}},
\]
and thus%
\[
T(y)=\langle a^{T},\omega(y)\rangle,
\]
where
\[
a^{T}=(a_{\mathbf{i}})_{\mathbf{i}\in\lbrack n]^{m}}.
\]
For the sake of simplicity we shall sometimes denote $T$ just by $a^{T}$. The
following result is a straightforward consequence of the Krein-Milman Theorem:

\begin{proposition}
\label{8888}If $a^{\mathbf{T}}\in\mathcal{L}(^{m}\mathbb{R}^{n}),$ then
\[
\Vert a^{\mathbf{T}}\Vert=\max\left\{  |\langle a^{\mathbf{T}},\omega(x)
\rangle|:\ x =(x_{1},\ldots,x_{m})\text{ and } x_{1},\ldots,x_{m}\in
ext(B_{\mathbb{R}^{n}}) \right\}  .
\]

\end{proposition}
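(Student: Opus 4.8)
The plan is to show both inequalities $\|a^{\mathbf{T}}\| \geq \max\{\cdots\}$ and $\|a^{\mathbf{T}}\| \leq \max\{\cdots\}$, the first being immediate from the definition of the norm \eqref{norma222} and the identity $T(x) = \langle a^{\mathbf{T}},\omega(x)\rangle$ established just above the statement, since each $x_i \in \ext(B_{\mathbb{R}^n})$ in particular satisfies $\|x_i\| \leq 1$. The content is therefore entirely in the reverse inequality: that the supremum in \eqref{norma222} is actually attained at a tuple of \emph{vertices} of the cube $B_{\mathbb{R}^n}$, rather than merely at points of the ball.

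For the reverse inequality I would argue coordinate by coordinate, freezing all but one argument. Fix any $x = (x_1,\ldots,x_m)$ with each $\|x_i\| \leq 1$. The map $t \mapsto T(t, x_2,\ldots,x_m)$ is linear on $\mathbb{R}^n$, hence its restriction to the compact convex set $B_{\mathbb{R}^n}$ attains its maximum modulus at an extreme point of $B_{\mathbb{R}^n}$ by Lemma \ref{km} (Krein--Milman): indeed a linear functional on a compact convex set attains its extrema on the extreme points. Thus there is $\tilde x_1 \in \ext(B_{\mathbb{R}^n})$ with $|T(x_1,x_2,\ldots,x_m)| \leq |T(\tilde x_1, x_2,\ldots,x_m)|$. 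Iterating this over the remaining $m-1$ slots — each time using multilinearity to view $T$ as a linear functional in the active variable while the others are held fixed — produces $\tilde x_1,\ldots,\tilde x_m \in \ext(B_{\mathbb{R}^n})$ with
\[
|T(x_1,\ldots,x_m)| \leq |T(\tilde x_1,\ldots,\tilde x_m)| = |\langle a^{\mathbf{T}}, \omega(\tilde x)\rangle|.
\]
Taking the supremum over all $x$ on the left gives $\|a^{\mathbf{T}}\| \leq \max\{|\langle a^{\mathbf{T}},\omega(\tilde x)\rangle| : \tilde x_i \in \ext(B_{\mathbb{R}^n})\}$, which is the desired bound. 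Finally, the maximum on the right is genuinely a maximum and not just a supremum because $\ext(B_{\mathbb{R}^n}) = \{-1,1\}^n$ is a finite set, so the index set $\{(\tilde x_1,\ldots,\tilde x_m) : \tilde x_i \in \ext(B_{\mathbb{R}^n})\}$ is finite.

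There is no real obstacle here; the only point requiring a modicum of care is the bookkeeping in the iteration — making sure that after replacing $x_i$ by $\tilde x_i$ one does not disturb the substitutions already made, which is automatic since each step only uses that the \emph{other} arguments are fixed vectors in the ball, and $\tilde x_j \in \ext(B_{\mathbb{R}^n}) \subseteq B_{\mathbb{R}^n}$ keeps them there. One could alternatively invoke the standard fact that the supremum defining the norm of a multilinear form over a product of balls is attained on the product of the extreme point sets; the inductive argument above is precisely the proof of that fact.
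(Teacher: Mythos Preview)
Your proof is correct and is essentially the same approach the paper has in mind: the paper states only that the proposition is ``a straightforward consequence of the Krein--Milman Theorem'' and omits details, and your coordinate-by-coordinate replacement argument is precisely the standard way to unpack that claim for multilinear forms. There is nothing to add.
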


The following \textit{lemmata} can be easily verified and thus its proof omitted.

\begin{lemma}
\label{aux} Let $V$ be a vector space of dimension $m<\infty$. If
$\alpha=\{v_{1},\ldots,v_{k}\}$ is a linearly independent set of $V$ with
$k<m$ and $\beta=\{u_{1},\ldots,u_{m}\}$ is a basis of $V$, then there exists
a basis $\gamma$ of $V$ such that $\alpha\subseteq\gamma$ and $\gamma
\backslash\alpha\subseteq\beta$.
\end{lemma}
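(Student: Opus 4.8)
The statement to prove is the final Lemma \ref{aux}: a Steinitz-exchange-type fact. Let me sketch a proof.

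\textbf{Plan.} The claim is a standard basis-completion (Steinitz exchange) statement, so the plan is to extend $\alpha$ to a basis of $V$ using only vectors drawn from the given basis $\beta$, adding them one at a time and discarding any that would create a linear dependence. I would argue by a finite induction on $k$, the size of the already-independent set being completed; equivalently, one runs a greedy completion procedure and checks it terminates correctly.

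\textbf{Key steps.} First, since $\dim V = m$ and $|\alpha| = k < m$, the span $W_0 := \mathrm{span}(\alpha)$ is a proper subspace of $V$. Because $\beta = \{u_1,\ldots,u_m\}$ spans $V$, not all $u_j$ can lie in $W_0$ (otherwise $V = \mathrm{span}(\beta) \subseteq W_0 \subsetneq V$, a contradiction); pick the first index $j_1$ with $u_{j_1} \notin W_0$. Then $\alpha \cup \{u_{j_1}\}$ is linearly independent and has $k+1$ elements. Second, iterate: having built a linearly independent set $\alpha \cup \{u_{j_1},\ldots,u_{j_\ell}\} \subseteq \alpha \cup \beta$ of size $k+\ell < m$, its span is again proper, so again some $u_j$ (necessarily with $j \notin \{j_1,\ldots,j_\ell\}$, since those are already in the span) lies outside it; adjoin it. Third, the process strictly increases the cardinality at each step while staying linearly independent inside a space of dimension $m$, so it stops precisely when $k + \ell = m$; at that moment $\gamma := \alpha \cup \{u_{j_1},\ldots,u_{j_{m-k}}\}$ is a linearly independent set of $m$ vectors in an $m$-dimensional space, hence a basis, with $\alpha \subseteq \gamma$ and $\gamma \setminus \alpha = \{u_{j_1},\ldots,u_{j_{m-k}}\} \subseteq \beta$ by construction.

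\textbf{Main obstacle.} There is essentially no deep obstacle here — this is the Steinitz exchange lemma and the excerpt itself flags it as ``easily verified.'' The only point requiring a word of care is verifying at each stage that some basis vector $u_j$ genuinely falls outside the current span (so the induction does not stall), which follows from the dimension count: a proper subspace cannot contain a spanning set. A secondary bookkeeping point is noting that the newly chosen index is automatically distinct from the previously chosen ones, so $\gamma \setminus \alpha$ really is a subset of $\beta$ of size exactly $m - k$; but this too is immediate since any previously adjoined $u_{j_i}$ already lies in the current span and hence cannot be selected again.
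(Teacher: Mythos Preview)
Your proof is correct; this is the standard Steinitz exchange/basis-completion argument, and every step is justified. The paper itself omits the proof entirely (declaring the lemma ``easily verified''), so there is nothing to compare against --- your write-up simply supplies the routine details the authors chose to skip.
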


\begin{lemma}
\label{fim} Let $V$ be a vector space. If $\Omega=\left\{  v_{1}%
,...,v_{s}\right\}  $ is a set of non-null vectors in $V$, then there exists
$\alpha\subseteq\Omega$ such that $\alpha$ is a maximal linearly independent
set and
\[
\Omega\subseteq span\left(  \alpha\right)  .
\]

\end{lemma}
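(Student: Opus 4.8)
The plan is to construct $\alpha$ by a greedy sweep through the finite list $v_{1},\ldots,v_{s}$, retaining a vector precisely when it is not already a linear combination of the ones kept before it.

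First I would set $\alpha_{1}=\{v_{1}\}$, which is linearly independent because $v_{1}\neq0$. Then, inductively for $k=2,\ldots,s$, I would put $\alpha_{k}=\alpha_{k-1}\cup\{v_{k}\}$ if $v_{k}\notin span(\alpha_{k-1})$, and $\alpha_{k}=\alpha_{k-1}$ otherwise; finally set $\alpha=\alpha_{s}$. By construction $\alpha\subseteq\Omega$, and a routine induction shows each $\alpha_{k}$ is linearly independent: whenever a new vector $v_{k}$ is adjoined it lies outside the span of the previously chosen (already independent) set, so independence is preserved at each step.

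Next I would verify the two required properties. For the inclusion $\Omega\subseteq span(\alpha)$: if $v_{k}\in\alpha$ this is immediate; if $v_{k}\notin\alpha$, then at step $k$ we had $v_{k}\in span(\alpha_{k-1})\subseteq span(\alpha)$ since $\alpha_{k-1}\subseteq\alpha$. Hence every $v_{k}$ lies in $span(\alpha)$. For maximality of $\alpha$ as a linearly independent subset of $\Omega$: any $v\in\Omega\setminus\alpha$ satisfies $v\in span(\alpha)$ by the previous sentence, so $\alpha\cup\{v\}$ is linearly dependent, and therefore $\alpha$ admits no proper enlargement inside $\Omega$ that remains independent.

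There is no genuinely hard step here; the only points that deserve a moment's care are the base case $v_{1}\neq0$ — this is exactly where the hypothesis that the $v_{i}$ are non-null enters, ensuring $\alpha$ is nonempty and the induction starts — and the remark that the construction only ever manipulates spans of finite subsets of $\Omega$, so no appeal to Zorn's lemma or to finite-dimensionality of $V$ is required. As an alternative, one could instead invoke Lemma \ref{aux} within the finite-dimensional subspace $span(\Omega)$, but the direct greedy argument above is shorter and self-contained.
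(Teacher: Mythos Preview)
Your argument is correct. The paper actually omits the proof of this lemma entirely, remarking only that it ``can be easily verified''; your greedy construction is the standard elementary argument and is precisely what one would expect the authors had in mind.
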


We will also use the following observation, which we announce as a lemma for
future reference:

\begin{lemma}
\label{lemapert} Let $v=(v_{1},\ldots,v_{n}),\ u=(u_{1},\ldots,u_{n})\text{
and }w=(w_{1},\ldots,w_{n})\in\mathbb{R}^{n}$. If
\[
w=\frac{1}{2}(u+v),
\]
then there is an $\alpha\in\mathbb{R}^{n}$ such that $u=w+\alpha$ and
$v=w-\alpha$.
\end{lemma}

%\begin{proof}
%For real numbers it is a simple exercise to show that if $a,b,c\in\mathbb{R}$
%are such that
%\[
%a=\frac{1}{2}(b+c),
%\]
%then there is a $h\in\mathbb{R}$ such that $b=a+h$ and $c=a-h$. So, by
%hypothesis we have
%\[
%w_{i}=\frac{1}{2}(u_{i}+v_{i})
%\]
%for all $i\in\lbrack n]$. hence, there is $h_{i}\in\mathbb{R}$ such that
%$u_{i}=w_{i}+h_{i}$ and $v_{i}=w_{i}-h_{i}$ for all $i\in\lbrack n]$. Defining
%$\alpha=(h_{1},\ldots,h_{n})\in\mathbb{R}^{n}$, we have
%\[
%u=w+\alpha\text{ and }v=w-\alpha.
%\]
%\end{proof}

Next is our first main result, which gives an instrumental characterization of
extreme points of the closed unit ball of the space of $m$-linear forms:

\begin{theorem}
\label{arr}Let $a^{T}\in B_{\mathcal{L}(^{m}\mathbb{R}^{n})}$. The following
assertions are equivalent:

\begin{itemize}
\item[(i)] $a^{T}\in ext\left(  B_{\mathcal{L}(^{m}\mathbb{R}^{n})}\right)  $

\item[(ii)] There exists $\beta\subseteq V_{m}^{n},$ basis of $\mathbb{R}%
^{n^{m}},$ such that $|\langle a^{\mathbf{T}},u\rangle|=1$ for all $u\in\beta$.
\end{itemize}
\end{theorem}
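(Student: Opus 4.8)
The plan is to prove the two implications separately, using $\|a^T\|=1$ together with Proposition~\ref{8888}, which tells us that the norm is attained at points of the form $\omega(x)$ with $x_i\in ext(B_{\mathbb{R}^n})$, i.e. at points of $V_m^n$.

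\smallskip

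\emph{Proof of $(ii)\Rightarrow(i)$.} Suppose such a basis $\beta=\{u_1,\dots,u_{n^m}\}\subseteq V_m^n$ exists with $|\langle a^T,u_k\rangle|=1$ for all $k$. Write $a^T=\frac12(b+c)$ with $b,c\in B_{\mathcal{L}(^m\mathbb{R}^n)}$. For each $k$, since $|\langle b,u_k\rangle|\le 1$ and $|\langle c,u_k\rangle|\le 1$ (here I use that $u_k\in V_m^n$, so these are values of the forms at extreme points of the ball of $\mathbb{R}^n$, hence bounded by the norms), and their average has modulus $1$, convexity of $|\cdot|$ on $[-1,1]$ forces $\langle b,u_k\rangle=\langle c,u_k\rangle=\langle a^T,u_k\rangle$. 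Since $\beta$ is a basis of $\mathbb{R}^{n^m}$, the linear functionals agree on a spanning set, so $b=c=a^T$. Hence $a^T$ is extreme. This direction is short; the only subtlety is making sure $|\langle b,u_k\rangle|\le\|b\|\le 1$, which is exactly Proposition~\ref{8888}.

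\smallskip

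\emph{Proof of $(i)\Rightarrow(ii)$.} This is the main obstacle. Let $W:=\{u\in V_m^n:|\langle a^T,u\rangle|=1\}$; by Proposition~\ref{8888} and $\|a^T\|=1$ this set is nonempty. Let $\alpha$ be a maximal linearly independent subset of $W$ (Lemma~\ref{fim}), so $W\subseteq\operatorname{span}(\alpha)$. Suppose, for contradiction, that $\operatorname{span}(\alpha)\neq\mathbb{R}^{n^m}$, i.e. $|\alpha|=k<n^m$. By Corollary~\ref{novvv} (applied to any $u\in\alpha$, or more directly by Corollary~\ref{coro1}), there is a basis $\gamma\subseteq V_m^n$ of $\mathbb{R}^{n^m}$; by Lemma~\ref{aux} we may extend $\alpha$ to a basis $\gamma'$ of $\mathbb{R}^{n^m}$ with $\gamma'\setminus\alpha\subseteq\gamma\subseteq V_m^n$. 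Pick $w\in\gamma'\setminus\alpha$; thus $w\in V_m^n$, $w\notin\operatorname{span}(\alpha)$, and in particular $w\notin W$, so $|\langle a^T,w\rangle|<1$. Now the idea is to perturb $a^T$ in the direction ``dual'' to $w$: choose a linear functional $\psi$ on $\mathbb{R}^{n^m}$ that vanishes on $\alpha$ but not at $w$ (possible since $w\notin\operatorname{span}(\alpha)$ and $\alpha\cup\{w\}$ extends to a basis), and identify $\psi$ with a vector $h\in\mathbb{R}^{n^m}$, i.e. $\psi(\cdot)=\langle h,\cdot\rangle$. Set $a^\pm:=a^T\pm\varepsilon h$. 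Then $a^T=\frac12(a^++a^-)$ and $a^+\neq a^-$, so to contradict extremality it suffices to show $a^+,a^-\in B_{\mathcal{L}(^m\mathbb{R}^n)}$ for $\varepsilon$ small, which by Proposition~\ref{8888} means $|\langle a^\pm,u\rangle|\le 1$ for every $u\in V_m^n$.

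\smallskip

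The verification splits $V_m^n$ into two parts. For $u\in W$: since $W\subseteq\operatorname{span}(\alpha)$ and $h$ was chosen orthogonal to $\operatorname{span}(\alpha)$, we get $\langle h,u\rangle=0$, hence $\langle a^\pm,u\rangle=\langle a^T,u\rangle$, of modulus $1$. (This is the crucial point: the perturbation direction must annihilate \emph{all} of $W$, not merely the independent subset $\alpha$ — which is why one passes to $\operatorname{span}(\alpha)\supseteq W$.) For $u\in V_m^n\setminus W$: here $|\langle a^T,u\rangle|<1$ strictly; since $V_m^n$ is a \emph{finite} set (each $x_i$ ranges over the finite set $ext(B_{\mathbb{R}^n})$ of $2^n$ vertices of the cube), we may set $\delta:=\max\{|\langle a^T,u\rangle|:u\in V_m^n\setminus W\}<1$ and $M:=\max\{|\langle h,u\rangle|:u\in V_m^n\}$, and choose $0<\varepsilon<(1-\delta)/M$; then $|\langle a^\pm,u\rangle|\le\delta+\varepsilon M<1$ for all such $u$. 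Combining the two cases gives $\|a^\pm\|\le 1$, contradicting that $a^T$ is extreme. Therefore $\operatorname{span}(\alpha)=\mathbb{R}^{n^m}$, so $\alpha$ itself is a basis of $\mathbb{R}^{n^m}$ contained in $V_m^n$ with $|\langle a^T,u\rangle|=1$ on it; taking $\beta:=\alpha$ finishes the proof. I expect the finiteness of $V_m^n$ (ensuring $\delta<1$ and $M<\infty$) and the correct choice of perturbation direction $h\perp\operatorname{span}(W)$ to be the two places where care is needed; the rest is bookkeeping with the lemmata already assembled.
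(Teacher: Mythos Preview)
Your proof is correct and follows essentially the same strategy as the paper's: for $(ii)\Rightarrow(i)$ both arguments show a midpoint decomposition must be trivial because equality is forced at every basis vector, and for $(i)\Rightarrow(ii)$ both take a maximal linearly independent subset $\alpha$ of the norm-attaining set $W$ (the paper's $\Omega$), extend it to a basis inside $V_m^n$ via Lemma~\ref{aux} and Corollary~\ref{coro1}, and then perturb $a^T$ in a direction annihilating $\operatorname{span}(\alpha)\supseteq W$. The only real difference is cosmetic: the paper fixes the perturbation $b$ by solving an explicit linear system in the extended basis and tracks coefficients $r/p$ by hand, whereas you choose any $h$ vanishing on $\alpha$ and invoke finiteness of $V_m^n$ directly to get $\delta<1$, $M<\infty$, and a suitable $\varepsilon$ --- your version is slightly cleaner but not a different idea.
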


\begin{proof}
We start off by proving (ii) implies (i). Let $\beta\subseteq V_{m}^{n}$ be a
basis of $\mathbb{R}^{n^{m}},$ such that $|\langle a^{\mathbf{T}},u\rangle|=1$
for all $u\in\beta$. From Lemma \ref{lemapert} it suffices to prove that given
$b\in\mathbb{R}^{n^{m}}$ such that $a^{\mathbf{T}}+ b,a^{\mathbf{T}}- b\in
B_{\mathcal{L}(^{m}\mathbb{R}^{n})}$, we have $b=0$. If $a^{\mathbf{T}}+
b,a^{\mathbf{T}}- b\in B_{\mathcal{L}(^{m}\mathbb{R}^{n})}$, we have
\[
|\langle a^{\mathbf{T}}+ b,u\rangle|\leq1
\]
and
\[
|\langle a^{\mathbf{T}}- b,u\rangle|\leq1
\]
for all $u\in\beta$. Since $|\langle a^{\mathbf{T}},u\rangle|=1$ for all
$u\in\beta$, we have
\[
\langle b,u\rangle=0
\]
for all $u\in\beta$; therefore $b=0.$

Now let us prove that (i) implies (ii). Let us suppose, for the sake of
contradiction, that for all $\beta\subseteq V_{m}^{n},$ basis of
$\mathbb{R}^{n^{m}},$ there is $u_{\beta}\in\beta$ such that $|\langle
a^{\mathbf{T}},u_{\beta}\rangle|<1$. Note that
\[
\Omega:=\left\{  u\in V_{m}^{n}:|\langle a^{\mathbf{T}},u\rangle|=1\right\}
\]
does not contain any basis $\beta\subseteq V_{m}^{n}$ of $\mathbb{R}^{n^{m}}$
and $card(\Omega)<\infty$. Suppose that
\[
\Omega\neq\emptyset.
\]
Then, by Lemma \ref{fim} there is $\alpha=\{\eta_{1},\ldots,\eta
_{k}\}\subseteq\Omega$, a maximal linearly independent set, such that
\[
\Omega\subseteq span\left(  \alpha\right)  .
\]
By Lemma \ref{aux} and Corollary \ref{coro1}, there is a basis $\gamma
=\{\eta_{1},\ldots,\eta_{k},\xi_{1},\ldots,\xi_{n^{m}-k}\}$ of $\mathbb{R}%
^{n^{m}}$ contained in $V_{m}^{n},$ such that
\[
\xi_{j}\in V_{m}^{n}\backslash\Omega
\]
for all $j=1,...,n^{m}-k$. In fact, if there were $\xi_{j}\in\Omega$, then
$\xi_{j}\in span(\alpha)$ and $\gamma$ would be linearly dependent. Let
\[
V_{m}^{n}\backslash\Omega:=\{\zeta_{1},\ldots,\zeta_{s}\}.
\]
For all $i\in\lbrack s]$, there is a $r_{i}>0$ such that
\[
-1<\langle a^{\mathbf{T}},\zeta_{i}\rangle-r_{i}<\langle a^{\mathbf{T}}%
,\zeta_{i}\rangle<\langle a^{\mathbf{T}},\zeta_{i}\rangle+r_{i}<1.
\]
Defining $r=\min\{r_{i}:i\in\lbrack s]\}$, we have
\[
-1<\langle a^{\mathbf{T}},\zeta_{i}\rangle-r<\langle a^{\mathbf{T}},\zeta
_{i}\rangle<\langle a^{\mathbf{T}},\zeta_{i}\rangle+r<1.
\]
For all $i\in\lbrack s]$, there exist unique real scalars $p_{j}^{\zeta_{i}}$
and $l_{j}^{\zeta_{i}}$ such that
\[
\zeta_{i}=\sum_{j=1}^{n^{m}-k}p_{j}^{\zeta_{i}}\xi_{j}+\sum_{j=1}^{k}%
l_{j}^{\zeta_{i}}\eta_{j}.
\]
Define
\[
p:=\max\{|p_{1}^{\zeta_{i}}|:i=1,\ldots,s\}.
\]
Since $\xi_{1}\in V_{m}^{n}\backslash\Omega$, it follows that $p\geq1.$ Since
$\gamma$ is a basis, there is a $0\neq b\in\mathbb{R}^{n^{m}}$ such that
\[
\left(
\begin{array}
[c]{c}%
\xi_{1}\\
\xi_{2}\\
\vdots\\
\xi_{n^{m}-k}\\
\eta_{1}\\
\vdots\\
\eta_{k}%
\end{array}
\right)  b^{t}=\left(
\begin{array}
[c]{c}%
r/p\\
0\\
\vdots\\
0\\
0\\
\vdots\\
0
\end{array}
\right)  .
\]
Thus%
\begin{align*}
\langle\ a^{\mathbf{T}}\pm b,\zeta_{i}\rangle &  =\langle\ a^{\mathbf{T}%
},\zeta_{i}\rangle\pm\langle b,\zeta_{i}\rangle\\
&  =\langle a^{\mathbf{T}},\zeta_{i}\rangle\pm\langle b,\sum_{j=1}^{n^{m}%
-k}p_{j}^{\zeta_{i}}\xi_{j}+\sum_{j=1}^{k}l_{j}^{\zeta_{i}}\eta_{j}\rangle\\
&  =\langle a^{\mathbf{T}},\zeta_{i}\rangle\pm\langle b,p_{1}^{\zeta_{i}}%
\xi_{1}\rangle\\
&  =\langle a^{\mathbf{T}},\zeta_{i}\rangle\pm p_{1}^{\zeta_{i}}\langle
b,\xi_{1}\rangle\\
&  =\langle a^{\mathbf{T}},\zeta_{i}\rangle\pm\frac{p_{1}^{\zeta_{i}}r}{p}.
\end{align*}
Therefore
\[
\left\vert \langle a^{\mathbf{T}}\pm b,\zeta_{i}\rangle\right\vert =\left\vert
\langle a^{\mathbf{T}},\zeta_{i}\rangle\pm\frac{p_{1}^{\zeta_{i}}r}%
{p}\right\vert \leq\left\vert \langle a^{\mathbf{T}},\gamma_{i}\rangle\pm
r\right\vert <1.
\]
If $\upsilon\in\Omega$, then
\[
|\langle a^{\mathbf{T}}\pm b,\upsilon\rangle|=|\langle a^{T},\upsilon
\rangle|=1,
\]
because $\upsilon\in span(\alpha)$. We thus conclude that $a^{\mathbf{T}%
}+b,a^{\mathbf{T}}-b\in B_{\mathcal{L}(^{m}\mathbb{R}^{n})}.$ Since
\[
a^{\mathbf{T}}=\frac{1}{2}\left(  a^{\mathbf{T}}+b+a^{\mathbf{T}}-b\right)  ,
\]
it follows that $a^{\mathbf{T}}$ is not an extreme of $B_{\mathcal{L}%
(^{m}\mathbb{R}^{n})}$.

If $\Omega=\emptyset$, then $a^{\mathbf{T}}$ is an interior point of
$B_{\mathcal{L}(^{m}\mathbb{R}^{n})}$, and the proof is complete.
\end{proof}

\subsection{The second main result}

Let
\begin{equation}
\mathcal{B}=\{\beta_{1},\ldots,\beta_{s}\} \label{dezzz}%
\end{equation}
be the set of all basis of $\mathbb{R}^{n^{m}}$ such that $\beta_{j}\subseteq
V_{m}^{n}$ for all $j$ and $\omega(e,e,\ldots,e)\in\beta_{j}$ for all $j$,
where $e=e_{1}+\ldots+e_{n}$. By Corollary we have $\mathcal{B}\neq
\varnothing$. For all $i\in\lbrack s]$, define the matrix $H_{\beta_{i}}$
whose lines are the vectors of $\beta_{i}.$ For instance, if%
\[
\beta_{i}=\left\{  v_{i,1},...,v_{i,n^{m}}\right\}  ,
\]
then%
\begin{equation}
H_{\beta_{i}}=\left(
\begin{array}
[c]{c}%
v_{i,1}\\
\vdots\\
v_{i,n^{m}}%
\end{array}
\right)  \label{dezzz2}%
\end{equation}
is an $n^{m}\times n^{m}$ matrix. Consider, for all $i\in\lbrack s]$ and all
$f\in ext\left(  B_{\mathbb{R}^{n^{m}}}\right)  $, the sets%
\[
\mathcal{A}_{i,f}=\{a^{\mathbf{T}}:H_{\beta_{i}}(a^{\mathbf{T}})^{t}=f^{t}\}
\]
and%
\begin{equation}
\mathcal{A}=%
%TCIMACRO{\tbigcup \limits_{i\in\lbrack s],f\in ext\left(  B_{\mathbb{R}^{n}%
%}\right)  }}%
%BeginExpansion
{\textstyle\bigcup\limits_{i\in\lbrack s],f\in ext\left(  B_{\mathbb{R}^{n}%
}\right)  }}
%EndExpansion
\mathcal{A}_{i,f}. \label{cal A}%
\end{equation}
Note that
\[
card(\mathcal{A})\leq card(\mathcal{B})\cdot2^{n^{m}}<\infty.
\]
Define, for all $g\in G_{m}^{n}$,
\[
\mathcal{C}_{g}=\left\{  a^{\mathbf{T}}\cdot g:a^{T}\in\mathcal{A},\ |\langle
a^{\mathbf{T}},v\rangle|\leq1\ \forall\ v\in V_{m}^{n}\text{ }\right\}
\label{conjut}%
\]
and%
\[
\mathcal{C}=%
%TCIMACRO{\tbigcup \limits_{g\in G_{m}^{n}}}%
%BeginExpansion
{\textstyle\bigcup\limits_{g\in G_{m}^{n}}}
%EndExpansion
\mathcal{C}_{g}.
\]
Note also that
\[
card(\mathcal{C})\leq card(\mathcal{A})\cdot card(G_{m}^{n})<\infty.
\]

\begin{lemma}
\label{rel} Let $a^{\mathbf{T}}=(a_{\mathbf{i}})_{\mathbf{i}\in\lbrack n]^{m}%
}$, $g\in G_{m}^{n}$ and $\omega(x)$ with $x=(x_{1},\ldots,x_{m})$ and
$x_{1},\ldots,x_{m}\in ext(B_{\mathbb{R}^{n}})$. Then
\[
\langle a^{\mathbf{T}}\cdot g,\omega(x)\rangle=\langle a^{\mathbf{T}}%
,\phi(g,\omega(x))\rangle.
\]

\end{lemma}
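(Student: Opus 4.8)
The plan is to unwind both sides using the explicit coordinate descriptions of the group action $\phi$ and of the right multiplication by $g$, and to check they produce the same scalar. Write $g = \mathrm{diag}(y^{\mathbf{j}})_{\mathbf{j}\in[n]^m}$ for suitable $y_1,\ldots,y_m \in \ext(B_{\mathbb{R}^n})$, as guaranteed by the definition of $G_m^n$.

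First I would compute the left-hand side. Since $a^{\mathbf{T}}\cdot g = (a_{\mathbf{j}} y^{\mathbf{j}})_{\mathbf{j}\in[n]^m}$ (the diagonal matrix $g$ acts on the row vector $a^{\mathbf{T}}$ by scaling the $\mathbf{j}$-th entry by $y^{\mathbf{j}}$), we get
\[
\langle a^{\mathbf{T}}\cdot g,\,\omega(x)\rangle = \sum_{\mathbf{j}\in[n]^m} a_{\mathbf{j}}\, y^{\mathbf{j}}\, x^{\mathbf{j}}.
\]
Next I would compute the right-hand side. By part (ii)–(iii) of the preceding Proposition on the action $\phi$, we have $\phi(g,\omega(x)) = \omega(x)\cdot g = (x^{\mathbf{j}} y^{\mathbf{j}})_{\mathbf{j}\in[n]^m} = \omega(z)$, where $z_i = (x_i^{(s)} y_i^{(s)})_{s\in[n]} \in \ext(B_{\mathbb{R}^n})$ for each $i$. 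Hence
\[
\langle a^{\mathbf{T}},\,\phi(g,\omega(x))\rangle = \sum_{\mathbf{j}\in[n]^m} a_{\mathbf{j}}\, x^{\mathbf{j}}\, y^{\mathbf{j}}.
\]
Comparing the two displays, both equal $\sum_{\mathbf{j}} a_{\mathbf{j}} x^{\mathbf{j}} y^{\mathbf{j}}$, so the identity follows. One should remark that this does not depend on the choice of representative $y$ for $g$, exactly as was verified in part (ii) of that Proposition, since $g$ as a matrix determines the numbers $y^{\mathbf{j}}$.

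There is no real obstacle here; the only thing requiring a moment of care is keeping the bookkeeping straight between the action of $g$ as a matrix on the right of a row vector and the lexicographically-ordered index set $[n]^m$, and invoking the already-established fact that $\omega(x)\cdot g$ again lies in $V_m^n$ so that the pairing on the right-hand side is well-posed. Both sides are manifestly the same bilinear expression in the coordinates of $a^{\mathbf{T}}$ and in the products $x^{\mathbf{j}} y^{\mathbf{j}}$, so the lemma is essentially a restatement of commutativity of scalar multiplication together with the coordinate formula $\phi(g,\omega(x)) = (x^{\mathbf{j}} y^{\mathbf{j}})_{\mathbf{j}}$ proved earlier.
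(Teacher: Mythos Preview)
Your proof is correct and follows essentially the same approach as the paper: write $g$ as $\mathrm{diag}(b^{\mathbf{j}})$ (the paper uses $b$ where you use $y$), compute $a^{\mathbf{T}}\cdot g$ coordinatewise, and observe that both sides reduce to the same sum $\sum_{\mathbf{j}} a_{\mathbf{j}} x^{\mathbf{j}} b^{\mathbf{j}}$ via $\phi(g,\omega(x))=\omega(x)\cdot g$. Your extra remarks on independence of representatives and well-posedness are fine but not needed for the argument.
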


\begin{proof}
Since $g\in G_{m}^{n}$, there are $b_{1},\ldots,b_{m}\in ext(B_{\mathbb{R}%
^{n}})$ such that
\[
g=diag(b^{\mathbf{j}})_{\mathbf{j}\in\lbrack n]^{m}}.
\]
Thus,
\[
a^{\mathbf{T}}\cdot g=(a_{\mathbf{i}})_{\mathbf{i}\in\lbrack n]^{m}}\cdot
diag(b^{\mathbf{j}})_{\mathbf{j}\in\lbrack n]^{m}}=(a_{\mathbf{j}%
}b^{\mathbf{j}})_{\mathbf{j}\in\lbrack n]^{m}}.
\]
Therefore
\begin{align*}
\langle a^{\mathbf{T}}\cdot g,\omega(x)\rangle &  =\sum_{\mathbf{j}\in\lbrack
n]^{m}}(a_{\mathbf{j}}b^{\mathbf{j}})x^{\mathbf{j}}\\
&  =\sum_{\mathbf{j}\in\lbrack n]^{m}}a_{\mathbf{j}}(x^{\mathbf{j}%
}b^{\mathbf{j}})\\
&  =\langle(a_{\mathbf{j}})_{\mathbf{j}\in\lbrack n]^{m}},(x^{\mathbf{j}%
}b^{\mathbf{j}})_{\mathbf{j}\in\lbrack n]^{m}}\rangle\\
&  =\langle a^{\mathbf{T}},\omega(x)\cdot g\rangle\\
&  =\langle a^{\mathbf{T}},\phi(g,\omega(x))\rangle.
\end{align*}

\end{proof}

Next theorem is our second main result of this section:

\begin{theorem}
\label{arrimp} $ext\left(  B_{\mathcal{L}(^{m}\mathbb{R}^{n})}\right)
=\mathcal{C}.$
\end{theorem}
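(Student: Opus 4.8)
The plan is to prove the two inclusions $\mathcal{C}\subseteq \ext(B_{\mathcal{L}(^m\mathbb{R}^n)})$ and $\ext(B_{\mathcal{L}(^m\mathbb{R}^n)})\subseteq\mathcal{C}$ separately, using Theorem \ref{arr} as the bridge in both directions. For the inclusion $\mathcal{C}\subseteq \ext(B)$, start with an arbitrary element $a^{\mathbf T}\cdot g\in\mathcal{C}_g$, coming from some $a^{\mathbf T}\in\mathcal{A}_{i,f}$ satisfying $|\langle a^{\mathbf T},v\rangle|\le 1$ for all $v\in V_m^n$. The defining relation $H_{\beta_i}(a^{\mathbf T})^t=f^t$ with $f\in \ext(B_{\mathbb{R}^{n^m}})$ means every coordinate of $f$ is $\pm 1$, hence $|\langle a^{\mathbf T},v\rangle|=1$ for every $v\in\beta_i$; since $\beta_i\subseteq V_m^n$ is a basis of $\mathbb{R}^{n^m}$, Theorem \ref{arr} gives $a^{\mathbf T}\in\ext(B)$ — but first one must check $a^{\mathbf T}\in B_{\mathcal{L}(^m\mathbb{R}^n)}$, which is exactly the side condition $|\langle a^{\mathbf T},v\rangle|\le 1$ for all $v\in V_m^n$ together with Proposition \ref{8888}. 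Finally, the map $a^{\mathbf T}\mapsto a^{\mathbf T}\cdot g$ is a linear isometry of $\mathcal{L}(^m\mathbb{R}^n)$: by Lemma \ref{rel}, $\langle a^{\mathbf T}\cdot g,\omega(x)\rangle=\langle a^{\mathbf T},\phi(g,\omega(x))\rangle$, and since $\phi(g,\cdot)$ permutes $V_m^n$ (it is a group action with values in $V_m^n$), Proposition \ref{8888} shows $\|a^{\mathbf T}\cdot g\|=\|a^{\mathbf T}\|$. An isometric linear bijection carries extreme points of the unit ball to extreme points, so $a^{\mathbf T}\cdot g\in \ext(B)$.

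For the reverse inclusion, take $a^{\mathbf T}\in\ext(B_{\mathcal{L}(^m\mathbb{R}^n)})$. By Theorem \ref{arr} there is a basis $\beta\subseteq V_m^n$ of $\mathbb{R}^{n^m}$ with $|\langle a^{\mathbf T},u\rangle|=1$ for all $u\in\beta$. The goal is to move $\beta$ by a group element $g\in G_m^n$ so that it contains $\omega(e,\dots,e)$ and thus becomes (up to the sign pattern) one of the $\beta_i$ in $\mathcal{B}$. Concretely: pick any fixed $u_0\in\beta$; since $\phi$ is a free transitive-on-fibers action, choose $g\in G_m^n$ with $\phi(g,\omega(e,\dots,e))=u_0$, equivalently $\omega(e,\dots,e)=\phi(g^{-1},u_0)$. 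Then $\beta':=\{\phi(g^{-1},v):v\in\beta\}$ is, by Corollary \ref{novvv}, again a basis of $\mathbb{R}^{n^m}$ contained in $V_m^n$, and it contains $\omega(e,\dots,e)$; hence $\beta'=\beta_i$ for some $i\in[s]$. Set $c^{\mathbf T}:=a^{\mathbf T}\cdot g$. By Lemma \ref{rel}, for each $v\in\beta$ we have $\langle c^{\mathbf T},\phi(g^{-1},v)\rangle=\langle a^{\mathbf T}\cdot g,\phi(g^{-1},v)\rangle=\langle a^{\mathbf T},\phi(g,\phi(g^{-1},v))\rangle=\langle a^{\mathbf T},v\rangle$, which has absolute value $1$; so $|\langle c^{\mathbf T},w\rangle|=1$ for every $w\in\beta_i$. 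Writing $f$ for the vector of these signs $\langle c^{\mathbf T},w\rangle\in\{\pm1\}$ indexed by the rows of $H_{\beta_i}$, we get $H_{\beta_i}(c^{\mathbf T})^t=f^t$ with $f\in\ext(B_{\mathbb{R}^{n^m}})$, i.e. $c^{\mathbf T}\in\mathcal{A}_{i,f}\subseteq\mathcal{A}$. Moreover $\|c^{\mathbf T}\|=\|a^{\mathbf T}\|\le 1$ by the isometry argument above (Lemma \ref{rel} and Proposition \ref{8888}), so $|\langle c^{\mathbf T},v\rangle|\le 1$ for all $v\in V_m^n$, which is the side condition in the definition of $\mathcal{C}_{g^{-1}}$. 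Therefore $a^{\mathbf T}=c^{\mathbf T}\cdot g^{-1}\in\mathcal{C}_{g^{-1}}\subseteq\mathcal{C}$.

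The main obstacle I anticipate is bookkeeping around the group action rather than any deep difficulty: one must be careful that $\phi$ is written as a left action with $\phi(g,\omega(x))=\omega(x)\cdot g$, so that the identities $\phi(g,\phi(h,v))=\phi(h\cdot g,v)$ and $a^{\mathbf T}\cdot(g\cdot h)=(a^{\mathbf T}\cdot g)\cdot h$ are threaded consistently; in particular the passage from $\beta$ containing $u_0$ to $\beta_i$ containing $\omega(e,\dots,e)$ must use the correct group element ($g$ versus $g^{-1}$), and Lemma \ref{rel} must be applied with the matching argument. A secondary point worth stating cleanly is that $H_{\beta_i}$ is invertible (its rows form a basis), so the linear system $H_{\beta_i}(c^{\mathbf T})^t=f^t$ has exactly one solution, which is why $\mathcal{A}_{i,f}$ is a singleton and $\mathcal{A}$ is finite — this makes the enumeration genuinely finite, matching the claim $\card(\mathcal{C})<\infty$. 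Once these identifications are in place, both inclusions follow immediately from Theorem \ref{arr}, and the theorem is proved.
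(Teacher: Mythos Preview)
Your proposal is correct and follows essentially the same route as the paper's proof: both inclusions are obtained by combining Theorem~\ref{arr} with the group action $\phi$ and Lemma~\ref{rel} to transport bases in $V_m^n$ so that they contain $\omega(e,\dots,e)$, thereby landing in some $\beta_i\in\mathcal{B}$. The only cosmetic difference is that you package the passage $a^{\mathbf T}\mapsto a^{\mathbf T}\cdot g$ as a linear isometry of $\mathcal{L}(^m\mathbb{R}^n)$ (which immediately sends extreme points to extreme points), whereas the paper verifies the hypothesis of Theorem~\ref{arr} for $a^{\mathbf T}\cdot g$ directly via Lemma~\ref{rel}; and you track $g$ versus $g^{-1}$ explicitly, while the paper silently uses that every $g\in G_m^n$ satisfies $g^2=I$.
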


\begin{proof}
Let us first show that $ext\left(  B_{\mathcal{L}(^{m}\mathbb{R}^{n})}\right)
\subseteq\mathcal{C}$. If $a^{\mathbf{T}}\in ext\left(  B_{\mathcal{L}%
(^{m}\mathbb{R}^{n})}\right)  $, then by Theorem \ref{arr} there exists
$\beta=\{v_{1},\ldots,v_{n^{m}}\}\subseteq V_{m}^{n},$ basis of $\mathbb{R}%
^{n^{m}},$ such that
\[
|\langle a^{\mathbf{T}},v\rangle|=1\ \forall\ v\in\beta.
\]
Let $H$ be the matrix whose lines are the vectors of $\beta$ and let%
\[
f=(\langle a^{\mathbf{T}},v_{1}\rangle,\ldots,\langle a^{\mathbf{T}},v_{n^{m}%
}\rangle)\in ext\left(  B_{\mathbb{R}^{n^{m}}}\right)  .
\]
Then
\[
H\cdot(a^{\mathbf{T}})^{t}=f^{t}.
\]
Since $\phi:G_{m}^{n}\times V_{m}^{n}\rightarrow V_{m}^{n}$ is a free action,
there is a $g\in G_{m}^{n}$ such that $\phi\left(  g,v_{1}\right)
=\omega(e,e,\ldots,e)$. Then, still using the notation introduced in
(\ref{dezzz}), by Corollary \ref{novvv} we have
\[
\left\{  \phi\left(  g,v\right)  :v\in\beta\right\}  =\beta_{j}%
\]
for a certain $j.$ Therefore
\[
H_{\beta_{j}}=\left(
\begin{array}
[c]{c}%
\phi\left(  g,v_{1}\right) \\
\vdots\\
\phi\left(  g,v_{n^{m}}\right)
\end{array}
\right)  =H\cdot g.
\]
Let $x$ be solution of%
\[
H_{\beta_{j}}\cdot x^{t}=f^{t}.
\]
Hence
\begin{align*}
x^{t}  &  =H_{\beta_{j}}^{-1}\cdot f^{t}\\
&  =g^{-1}\cdot\left(  H^{-1}\cdot f^{t}\right) \\
&  =g\cdot(a^{\mathbf{T}})^{t}.
\end{align*}
Therefore,
\begin{equation}
a^{\mathbf{T}}\cdot g=x\in\mathcal{A}_{j,f}\subseteq\mathcal{A}. \label{ummm}%
\end{equation}
Given $y=(y_{1},\ldots,y_{m})$ with $y_{1},\ldots,y_{m}\in ext(B_{\mathbb{R}%
^{n}})$, by Lemma \ref{rel}, we have
\begin{equation}
|\langle x,\omega(y)\rangle|=|\langle a^{\mathbf{T}}\cdot g,\omega
(y)\rangle|=|\langle a^{\mathbf{T}},\phi(g,\omega(y))\rangle|\leq1.
\label{doisss}%
\end{equation}
By Proposition \ref{8888} we have $x\in B_{\mathcal{L}(^{m}\mathbb{R}^{n})}$
and finally, by (\ref{ummm}) and (\ref{doisss}) we get
\[
a^{\mathbf{T}}=x\cdot g\in\mathcal{C}_{g}\subseteq\mathcal{C}\text{,}%
\]
i.e.,%
\[
ext\left(  B_{\mathcal{L}(^{m}\mathbb{R}^{n})}\right)  \subseteq\mathcal{C}.
\]
Now, let us show that $\mathcal{C}\subseteq ext\left(  B_{\mathcal{L}%
(^{m}\mathbb{R}^{n})}\right)  $. If $x\in\mathcal{C}$, then $x=a^{\mathbf{T}%
}\cdot g$ with $a^{\mathbf{T}}\in\mathcal{A}$ and $g\in G_{m}^{n}$, where
$|\langle a^{\mathbf{T}},v\rangle|\leq1$ for all $v\in V_{m}^{n}$. There are
$j$ and $f$ such that $a^{\mathbf{T}}\in\mathcal{A}_{j,f}$ and there is
$\beta_{j}\in\mathcal{B}$ such that $|\langle a^{\mathbf{T}},u\rangle|=1$ for
all $u\in\beta_{j}$. Since $g$ is invertible then
\[
\beta:=\{\phi(g,u):u\in\beta_{j}\}
\]
is a basis of $\mathbb{R}^{n^{m}}$. Then, for all $u\in\beta_{j}$, by Lemma
\ref{rel}, we have
\[
|\langle x,\phi(g,u)\rangle|=|\langle(a^{\mathbf{T}}\cdot g),\phi
(g,u)\rangle|=|\langle a^{\mathbf{T}},\phi(g\cdot g,u)\rangle|=|\langle
a^{\mathbf{T}},u\rangle|=1.
\]
Since $|\langle a^{\mathbf{T}},v\rangle|\leq1$ for all $v\in V_{m}^{n}$, using
the same argument, by Lemma \ref{rel}, we have
\[
|\langle x,\phi(g,v)\rangle|=|\langle(a^{\mathbf{T}}\cdot g),\phi
(g,v)\rangle|=|\langle a^{\mathbf{T}},\phi(g\cdot g,v)\rangle|=|\langle
a^{\mathbf{T}},v\rangle|\leq1
\]
for all $v\in V_{m}^{n}$ and hence $x\in B_{\mathcal{L}(^{m}\mathbb{R}^{n})}$.
By Theorem \ref{arr} we conclude that $x\in ext\left(  B_{\mathcal{L}%
(^{m}\mathbb{R}^{n})}\right)  $.
\end{proof}

\begin{corollary}
For all positive integers $m,n$, the coefficients of the extreme points $T\in
B_{\mathcal{L}(^{m}\mathbb{R}^{n})}$ are rational numbers.
\end{corollary}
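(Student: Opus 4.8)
The plan is to trace rationality through the constructive description of $\mathcal{C}$ given by Theorem \ref{arrimp}, namely $\ext\left(B_{\mathcal{L}(^{m}\mathbb{R}^{n})}\right)=\mathcal{C}=\bigcup_{g\in G_{m}^{n}}\mathcal{C}_{g}$. An element $T$ of $\mathcal{C}$ has the form $a^{\mathbf{T}}=x\cdot g$ where $g\in G_{m}^{n}$ and $x\in\mathcal{A}_{j,f}$ for some basis $\beta_{j}\in\mathcal{B}$ and some $f\in\ext(B_{\mathbb{R}^{n^{m}}})$. So it suffices to observe, in turn, that (a) the entries of the matrix $H_{\beta_{j}}$ are rational; (b) consequently the unique solution $x$ of the linear system $H_{\beta_{j}}\,x^{t}=f^{t}$ has rational coordinates; and (c) right-multiplication by $g$ preserves rationality.

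First I would address (a). The rows of $H_{\beta_{j}}$ are vectors $\omega(x)$ with $x=(x_{1},\ldots,x_{m})$ and each $x_{i}\in\ext(B_{\mathbb{R}^{n}})$; since $\mathbb{R}^{n}$ carries the sup norm, $\ext(B_{\mathbb{R}^{n}})=\{-1,+1\}^{n}$, so every coordinate $x_{i}^{(k_{i})}$ lies in $\{-1,1\}$, and hence by \eqref{9d} every entry $x^{\mathbf{j}}=\prod_{i=1}^{m}x_{i}^{(j_{i})}$ of $\omega(x)$ lies in $\{-1,1\}\subseteq\mathbb{Q}$. Thus $H_{\beta_{j}}$ is an integer (in fact $\{-1,1\}$-)matrix; the same computation shows that each $f\in\ext(B_{\mathbb{R}^{n^{m}}})=\{-1,1\}^{n^{m}}$ is an integer vector. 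For (b), because $\beta_{j}$ is a basis of $\mathbb{R}^{n^{m}}$, the matrix $H_{\beta_{j}}$ is invertible, and by Cramer's rule $x^{t}=H_{\beta_{j}}^{-1}f^{t}$ has coordinates that are quotients of determinants of integer matrices, hence rational. For (c), any $g\in G_{m}^{n}$ is a diagonal matrix $\diag(b^{\mathbf{j}})_{\mathbf{j}\in[n]^{m}}$ with $b^{\mathbf{j}}\in\{-1,1\}$ by the same argument as in (a), so $a^{\mathbf{T}}=x\cdot g=(x_{\mathbf{j}}b^{\mathbf{j}})_{\mathbf{j}\in[n]^{m}}$ has rational (indeed, merely sign-changed) coordinates. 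Chaining (a)--(c) gives that every $T\in\mathcal{C}$ has rational coefficients, which by Theorem \ref{arrimp} is exactly the assertion.

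There is essentially no obstacle here: the corollary is a bookkeeping consequence of the fact that the entire construction is built from the finite $\{-1,1\}$-valued data $V_{m}^{n}$, $G_{m}^{n}$, and $\ext(B_{\mathbb{R}^{n^{m}}})$, together with one application of linear algebra over $\mathbb{Q}$. The only point that deserves a half-sentence of care is the identification $\ext(B_{\mathbb{R}^{n}})=\{-1,1\}^{n}$ for the sup norm (this is the standard description of the vertices of the cube and is implicitly used throughout the paper), and the remark that invertibility of $H_{\beta_{j}}$ — which is guaranteed since $\beta_{j}$ is a basis — is what licenses Cramer's rule. Everything else is immediate.
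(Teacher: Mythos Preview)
Your proof is correct and follows essentially the same route as the paper's: the matrices $H_{\beta_j}$ have entries $\pm 1$, the right-hand sides $f$ have entries $\pm 1$, so the solutions are rational by Cramer's rule. Your version is in fact slightly more careful than the paper's terse argument, since you explicitly handle step~(c), the action of $g\in G_m^n$ (which only flips signs and hence preserves rationality), whereas the paper's proof simply says the extreme points are ``found among these solutions'' without mentioning the subsequent multiplication by $g$.
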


\begin{proof}
Note that we start off with an $n^{m}\times n^{m}$ matrix whose entries are
$1$ or $-1.$ We solve a linear system whose independent terms are $1$ or $-1.$
The extreme points are found among these solutions, and obviously all of its
coordinates are rational numbers.
\end{proof}

\section{Constructive process}

\label{CPPP}

An easily implemented algorithm can be extracted from the proofs delivered in
the previous two sections. Below we summarize how to find all extreme points
of the closed unit ball of $B_{\mathcal{L}(^{m}\mathbb{R}^{n})}$:

\begin{itemize}
\item[Step 1:] Determinate all $n^{m}\times n^{m}$ invertible matrices whose
lines belong to $V_{m}^{n},$ that contain $\omega(e,\ldots,e)$. Note that
using the notations from (\ref{dezzz}) and (\ref{dezzz2}) the set of such
matrices is%
\[
\mathcal{M}=\left\{  H_{\beta_{i}}:\beta_{i}\in\mathcal{D}\subseteq
\mathcal{B}\right\}
\]
for a certain $\mathcal{D}$.

\item[Step 2:] For all choices of $f\in ext({B_{\mathbb{R}^{n^{m}}}})$ and
each matrix $H_{\beta_{i}}$ collected in Step 1, solve the linear system%
\[
H_{\beta_{i}}\cdot\left(  a^{\mathbf{T}}\right)  ^{t}=f^{t}.
\]

\item[Step 3:] Among all solutions given by the second step, verify which
solutions also satisfy
\[
|\langle a^{\mathbf{T}},v\rangle|\leq1
\]
for all $v\in V_{m}^{n}$.

\item[Step 4:] Among all solutions given by the third step, calculate
\[
a^{\mathbf{T}}\cdot g
\]
for all $g\in G_{m}^{n}$. The set of all such $a^{\mathbf{T}}\cdot g$ is
precisely the set of all extreme points of $B_{\mathcal{L}(^{m}\mathbb{R}%
^{n})}.$
\end{itemize}

\subsection{Examples}

As mentioned earlier, previous knowledge on extreme points of the unit ball in
the space of multilinear forms were limited to low dimensions and/or low
degrees. The simplest case, $n=m=2$, appears in the work of S.G. Kim,
\cite{kim}, and accordingly can be obtained by our method.

\begin{example}
\label{jaa}All extreme points of $B_{\mathcal{L}(^{2}\mathbb{R}^{2})}$ are:
\[%
\begin{array}
[c]{cccc}%
\pm(0,0,0,1), & \pm\frac{1}{2}(1,1,1,-1), & \pm\frac{1}{2}(1,1,-1,1), &
\pm\frac{1}{2}(1,-1,1,1),\\
\pm\frac{1}{2}(-1,1,1,1), & \pm(0,0,1,0), & \pm(0,1,0,0), & \pm(1,0,0,0).
\end{array}
\]

\end{example}

For 3-forms and 4-forms, though, very little, if anything, were previously
known, even restricted to the plane. Here are some illustrative examples:

\begin{example}
The following vectors are extreme points of $B_{\mathcal{L}(^{3}\mathbb{R}%
^{2})}$:
\[%
\begin{array}
[c]{ccc}%
\pm(1,0,0,0,0,0,0,0), & \pm\frac{1}{4}(1,-1,-1,1,-1,1,1,3), & \pm\frac{1}%
{2}(0,0,0,0,-1,1,1,1).
\end{array}
\]
All extreme points of $B_{\mathcal{L}(^{3}\ell_{\infty}^{2})}$ can be found
through the algorithm above described.
\end{example}

\begin{example}
Here are some extreme points of $B_{\mathcal{L}(^{4}\mathbb{R}^{2})}$:
\begin{align*}
&  \pm(1,0,0,0,0,0,0,0,0,0,0,0,0,0,0,0),\\
&  \pm\frac{1}{8}(-1,1,1,-1,1,-1,-1,1,1,-1,-1,1,-1,1,1,7),\\
&  \pm\frac{1}{4}(0,1,0,1,0,-1,0,1,0,-1,0,1,0,1,0,3),\\
&  \pm\frac{1}{8}(1,1,1,-3,-1,-1,-1,3,-1,-1,-1,3,1,1,1,5,)\\
&  \pm\frac{1}{2}(0,0,0,-1,0,0,0,1,0,0,0,1,0,0,0,1),\\
&  \pm\frac{1}{4}(0,0,-1,-1,1,-1,0,2,0,0,1,1,-1,1,0,2).
\end{align*}
Again, the complete list of extreme points of $B_{\mathcal{L}(^{3}%
\mathbb{R}^{2})}$ can be found through the algorithm above described.
\end{example}

\subsection{The planar case}

In the special case, $n=2$, we have
\[
\Vert a^{T}\Vert=\max\left\{  |\langle a^{T},\omega(x_{1},\ldots,x_{m}%
)\rangle|:x_{1},\ldots,x_{m}\in\{(1,1),(-1,1)\}\right\} ,
\]
for any arbitrary integer $m$. Let $x=(x_{i})_{i=1}^{m}$ and $y=(y_{i}%
)_{i=1}^{m}$ be such that
\[
x_{i},y_{i}\in\{(1,1),(-1,1)\}
\]
for all $i\in\lbrack m]$. Since $x\neq y$, there exists a $j_{0}\in\lbrack m]$
such that $x_{j_{0}}\neq y_{j_{0}}$. Thus, by Lemma 1, we have
\[
\langle\omega(x),\omega(y)\rangle=\Pi_{i=1}^{m}\langle x_{i},y_{i}\rangle=0.
\]
So, we can determinate the extreme points of $B_{\mathcal{L}(^{m}%
\mathbb{R}^{2})}$ as follows:

Step 1: Build the matrix $H$ such that the lines are the values of
$\Lambda_{2}(\beta)$, where $\beta=\{(1,1),(-1,1)\}$.

Step 2: For each matrix $f\in ext(B_{\mathbb{R}^{2^{m}}})$, solve the linear
system
\[
Hx^{t}=f^{t}.
\]

From the above routine we have the following result:

\begin{proposition}
For all positive integer $m$ we have%
\[
card(ext(B_{\mathcal{L}(^{m}\mathbb{R}^{2})}))=2^{(2^{m})}.
\]

\end{proposition}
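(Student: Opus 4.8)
The plan is to count the extreme points in the planar case $n=2$ by exploiting the orthogonality observed just before the statement. Recall that in the planar case $\mathrm{ext}(B_{\mathbb{R}^2})=\{(1,1),(-1,1)\}$ up to sign, so $V_m^2$ consists of $\omega(x)$ with each $x_i\in\{(1,1),(-1,1)\}$ — there are $2^m$ such choices, and distinct choices $x\neq y$ give $\langle\omega(x),\omega(y)\rangle=\prod_i\langle x_i,y_i\rangle=0$ by Lemma \ref{tec}, since some factor vanishes. Moreover each $\omega(x)$ has norm $\prod_i\|x_i\|_2=\prod_i\sqrt2=2^{m/2}$, so the $2^m$ vectors $\{\omega(x):x_i\in\{(1,1),(-1,1)\}\}$ are mutually orthogonal and hence form an orthogonal basis of $\mathbb{R}^{2^m}$ (the dimension matches: $2^m=n^m$). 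In particular $V_m^2$ itself, which contains exactly these $2^m$ vectors (every $x_i\in\{(1,1),(-1,1)\}$ because those are the only extreme points of $B_{\mathbb{R}^2}$, up to the sign which is absorbed into $\omega$ only in pairs — I will need to be slightly careful here, see below), is a single orthogonal basis.

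The key structural simplification is this: because there is essentially only one basis $\beta\subseteq V_m^2$ of $\mathbb{R}^{2^m}$ — namely $\Lambda_2(\beta)$ with $\beta=\{(1,1),(-1,1)\}$, after sorting out signs — Theorem \ref{arr} says that $a^T\in\mathrm{ext}(B_{\mathcal{L}(^m\mathbb{R}^2)})$ if and only if $|\langle a^T,u\rangle|=1$ for every $u$ in this one basis. Writing $H$ for the matrix whose rows are the $2^m$ orthogonal vectors $u$, an extreme point is precisely a solution $a^T$ of $H(a^T)^t=f^t$ for some sign vector $f\in\{-1,1\}^{2^m}=\mathrm{ext}(B_{\mathbb{R}^{2^m}})$, and since $H$ is invertible this determines $a^T$ uniquely from $f$. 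Thus there are at most $2^{(2^m)}$ extreme points. The plan is then to check that all of these are genuinely extreme (equivalently, lie in $B_{\mathcal{L}(^m\mathbb{R}^2)}$): by Proposition \ref{8888}, $\|a^T\|=\max_{v\in V_m^2}|\langle a^T,v\rangle|$, and since $V_m^2$ is exactly the row set of $H$ (up to sign), $\|a^T\|=\max_{u}|\langle a^T,u\rangle|=\max_i|f_i|=1$. Hence every one of the $2^{(2^m)}$ solutions is an extreme point, and distinct $f$ give distinct $a^T$ because $H$ is invertible, so the count is exactly $2^{(2^m)}$.

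Concretely I would proceed as follows. First, establish that $V_m^2$ has exactly $2^m$ elements and that they form an orthogonal basis of $\mathbb{R}^{2^m}$, using Lemma \ref{tec} for orthogonality and a norm computation (or Corollary \ref{coro1} plus a dimension count) for the basis property. Second, observe that this forces $\mathcal{B}$ (the set of bases $\beta\subseteq V_m^2$ containing $\omega(e,\ldots,e)$) to be the single basis $V_m^2$ itself, so Theorem \ref{arr} reduces to: $a^T$ is extreme iff $H(a^T)^t\in\{-1,1\}^{2^m}$. Third, invoke Proposition \ref{8888} together with the fact that the rows of $H$ exhaust $V_m^2$ to conclude that every such $a^T$ automatically has norm $1$. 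Fourth, since $f\mapsto H^{-1}f^t$ is a bijection from $\{-1,1\}^{2^m}$ onto this solution set, conclude $\mathrm{card}(\mathrm{ext}(B_{\mathcal{L}(^m\mathbb{R}^2)}))=2^{(2^m)}$.

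The main obstacle is the bookkeeping around the sign ambiguity in $\omega$: the extreme points of $B_{\mathbb{R}^2}$ are the four vectors $\pm(1,1),\pm(-1,1)$, so a priori $V_m^2$ is indexed by tuples in a four-element set, but $\omega(x_1,\ldots,x_m)$ only depends on the product of signs, collapsing these to $2^m$ distinct vectors (flipping the sign of any $x_i$ flips $\omega$, and flipping two cancels). I must argue carefully that after this collapse $V_m^2$ really is a set of exactly $2^m$ vectors forming an orthogonal basis — equivalently that $G_m^2$ acts so that the orbit structure is trivial in the relevant sense — and that consequently the sets $\mathcal{C}_g$ of Theorem \ref{arrimp} add nothing new. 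Once the sign issue is handled cleanly, the orthogonality of $V_m^2$ makes the rest a short computation, and in fact one sees directly that $a^T=2^{-m}H^t f^t$ (since $H H^t=2^m I$), recovering the rational, dyadic coefficients visible in the worked examples.
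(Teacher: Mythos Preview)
Your approach is correct and is exactly the paper's: the discussion preceding the proposition already records that the $2^m$ vectors $\omega(v)$ with $v_i\in\{(1,1),(-1,1)\}$ are pairwise orthogonal, so the single matrix $H$ is invertible, and your proposal simply makes explicit why the bijection $f\mapsto H^{-1}f^t$ from $\{-1,1\}^{2^m}$ lands precisely on the extreme points (via Theorem~\ref{arr} for extremality and Proposition~\ref{8888} for the norm bound).

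One small correction to your bookkeeping: the sign collapse you describe reduces the $4^m$ tuples to $2^{m+1}$ distinct vectors in $V_m^2$, not $2^m$ --- you get $\pm\omega(v)$ for each of the $2^m$ choices of $v$. This does not damage the argument, since any basis $\beta\subseteq V_m^2$ must select one sign from each orthogonal pair and hence imposes the identical constraint $|\langle a^T,u_i\rangle|=1$ for all $i$, and Proposition~\ref{8888} is insensitive to sign; but you should state the cardinality correctly when you write it up.
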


\section{Applications: optimization problems in classical
inequalities\label{apli}}

{In this section we briefly discuss the fit of our main characterization theorems within investigations pertaining to 
classical inequalities. Of particular interest, we
formally solve the open problem of determining all optimal constants
of the $m$--linear Bohnenblust--Hille inequalities for real scalars.}

We start off with two observations, which we state as propositions for future references. The former is a straightforward consequence of the Krein-Milman Theorem (Lemma
\ref{km}) and Theorem \ref{arrimp}:

\begin{proposition}
\label{arrmax}Let $f \colon B_{\mathcal{L}\left(  ^{m}\mathbb{R}^{n}\right)
}\rightarrow\mathbb{R}$ be a convex and continuous function. Then%
\[
\max_{T\in B_{\mathcal{L}\left(  ^{m}\mathbb{R}^{n}\right)  }}f(T)=\max
\left\{  f(T):T\in\mathcal{C}\right\}  .
\]
\end{proposition}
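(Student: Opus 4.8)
The plan is to derive Proposition \ref{arrmax} as an immediate consequence of the Krein--Milman Theorem (Lemma \ref{km}) together with the explicit identification $ext\left(B_{\mathcal{L}(^{m}\mathbb{R}^{n})}\right)=\mathcal{C}$ provided by Theorem \ref{arrimp}. The key point is that a convex continuous function on a compact convex set attains its maximum at an extreme point, and here the extreme set is the finite set $\mathcal{C}$.

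First I would record that $B_{\mathcal{L}(^{m}\mathbb{R}^{n})}$ is a nonempty, convex, compact subset of the finite-dimensional space $\mathcal{L}(^{m}\mathbb{R}^{n})$ (it is the closed unit ball of a norm on a finite-dimensional vector space), so Lemma \ref{km} applies and yields $B_{\mathcal{L}(^{m}\mathbb{R}^{n})}=conv\left(ext\,B_{\mathcal{L}(^{m}\mathbb{R}^{n})}\right)$. Since $f$ is continuous and $B_{\mathcal{L}(^{m}\mathbb{R}^{n})}$ is compact, the maximum $M:=\max_{T\in B_{\mathcal{L}(^{m}\mathbb{R}^{n})}}f(T)$ is attained. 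Next, I would take any $T_{0}\in B_{\mathcal{L}(^{m}\mathbb{R}^{n})}$ with $f(T_{0})=M$ and use that $T_{0}$ lies in the closed convex hull of the extreme points; because $\mathcal{C}=ext\,B_{\mathcal{L}(^{m}\mathbb{R}^{n})}$ is finite (as noted right before Lemma \ref{rel}, $card(\mathcal{C})<\infty$), its convex hull is already closed, so $T_{0}$ is a genuine (finite) convex combination $T_{0}=\sum_{k}\lambda_{k}T_{k}$ with $T_{k}\in\mathcal{C}$, $\lambda_{k}\geq0$, $\sum_{k}\lambda_{k}=1$.

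Then I would invoke convexity of $f$: $M=f(T_{0})=f\left(\sum_{k}\lambda_{k}T_{k}\right)\leq\sum_{k}\lambda_{k}f(T_{k})\leq\sum_{k}\lambda_{k}M=M$, forcing $f(T_{k})=M$ for every $k$ with $\lambda_{k}>0$. In particular $\max\{f(T):T\in\mathcal{C}\}\geq M$, and since $\mathcal{C}\subseteq B_{\mathcal{L}(^{m}\mathbb{R}^{n})}$ the reverse inequality is trivial, giving $\max_{T\in B_{\mathcal{L}(^{m}\mathbb{R}^{n})}}f(T)=\max\{f(T):T\in\mathcal{C}\}$, which is exactly the claim. (If one prefers to avoid the finiteness remark, one may instead use the classical Bauer maximum principle, but since $\mathcal{C}$ is finite the elementary argument above suffices.)

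There is essentially no obstacle here: the statement is a packaging of standard convex-analysis facts specialized via Theorem \ref{arrimp}. The only mild point of care is ensuring that one may write $T_{0}$ as an \emph{honest finite} convex combination of extreme points rather than a limit of such; this is guaranteed precisely because $\mathcal{C}$ is finite, hence its convex hull is compact and equals its closed convex hull, and by Carathéodory's theorem finitely many terms suffice. Everything else is a one-line computation.
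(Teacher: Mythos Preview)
Your proof is correct and is precisely the argument the paper has in mind: the authors do not give a proof at all, merely stating that the proposition ``is a straightforward consequence of the Krein--Milman Theorem (Lemma \ref{km}) and Theorem \ref{arrimp},'' which is exactly the combination you spell out. Your added care regarding the finiteness of $\mathcal{C}$ (so that the closed convex hull coincides with the ordinary convex hull) is appropriate and fills in the only detail one might worry about.
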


The next result is also useful for computational purposes:

\begin{proposition}
Let $1\leq\lambda<\infty$. If $f_{\lambda} \colon B_{\mathcal{L}(^{m}\mathbb{R}^{n}%
)}\rightarrow\mathbb{R}$ is
\[
f_{\lambda}(T)=\left(  \sum_{\mathbf{i}\in\lbrack n]^{m}}\left\vert
T(e_{i_{1}},\ldots,e_{i_{m}})\right\vert ^{\lambda}\right)  ^{\frac{1}%
{\lambda}},
\]
then
\[
\max_{T\in B_{\mathcal{L}(^{m}\mathbb{R}^{n})}}f_{\lambda}(T)=\max\left\{
f_{\lambda}(T):T\in\mathcal{A}\cap B_{\mathcal{L}(^{m}\mathbb{R}^{n}%
)}\right\}  ,
\]
where $\mathcal{A}$ is given by \eqref{cal A}.
\end{proposition}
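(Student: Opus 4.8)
The plan is to reduce the maximization over $B_{\mathcal{L}(^{m}\mathbb{R}^{n})}$ to one over the finite set $\mathcal{C}$ of extreme points via Proposition~\ref{arrmax}, and then to further collapse $\mathcal{C}$ onto $\mathcal{A}\cap B_{\mathcal{L}(^{m}\mathbb{R}^{n})}$ by exploiting the fact that $f_{\lambda}$ is constant along the orbits of the action $\phi$ of $G_{m}^{n}$.

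First I would record that $f_{\lambda}$ is convex and continuous. Identifying each $T$ with its coefficient vector $a^{T}=(a_{\mathbf{i}})_{\mathbf{i}\in[n]^{m}}$, where $a_{\mathbf{i}}=T(e_{i_{1}},\ldots,e_{i_{m}})$, we have $f_{\lambda}(T)=\|a^{T}\|_{\ell^{\lambda}}$, the $\ell^{\lambda}$-norm of a vector depending linearly on $T$; convexity and continuity are then immediate. Hence Proposition~\ref{arrmax} gives $\max_{T\in B_{\mathcal{L}(^{m}\mathbb{R}^{n})}}f_{\lambda}(T)=\max\{f_{\lambda}(T):T\in\mathcal{C}\}$, and Theorem~\ref{arrimp} writes $\mathcal{C}=\bigcup_{g\in G_{m}^{n}}\mathcal{C}_{g}$. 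Invoking Proposition~\ref{8888} together with the description \eqref{9ddd} of $V_{m}^{n}$, the condition $|\langle a^{T},v\rangle|\le 1$ for all $v\in V_{m}^{n}$ is exactly the condition $a^{T}\in B_{\mathcal{L}(^{m}\mathbb{R}^{n})}$, so $\mathcal{C}_{g}=\{a^{T}\cdot g:a^{T}\in\mathcal{A}\cap B_{\mathcal{L}(^{m}\mathbb{R}^{n})}\}$ for every $g\in G_{m}^{n}$; in particular, taking $g$ to be the identity of $G_{m}^{n}$ shows $\mathcal{A}\cap B_{\mathcal{L}(^{m}\mathbb{R}^{n})}\subseteq\mathcal{C}$.

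The key step is the orbit-invariance of $f_{\lambda}$: I claim $f_{\lambda}(a^{T}\cdot g)=f_{\lambda}(a^{T})$ for all $g\in G_{m}^{n}$. Writing $g=diag(b^{\mathbf{j}})_{\mathbf{j}\in[n]^{m}}$ with $b_{1},\ldots,b_{m}\in ext(B_{\mathbb{R}^{n}})$, and using that $ext(B_{\mathbb{R}^{n}})=\{-1,1\}^{n}$ since $\mathbb{R}^{n}$ carries the $\sup$ norm, every diagonal entry $b^{\mathbf{j}}=\prod_{i=1}^{m}b_{i}^{(j_{i})}$ equals $\pm1$. Consequently $a^{T}\cdot g=(a_{\mathbf{j}}b^{\mathbf{j}})_{\mathbf{j}\in[n]^{m}}$ has the same coordinates as $a^{T}$ up to signs, whence $\sum_{\mathbf{j}}|a_{\mathbf{j}}b^{\mathbf{j}}|^{\lambda}=\sum_{\mathbf{j}}|a_{\mathbf{j}}|^{\lambda}$ and the claim follows. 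This is the only genuinely load-bearing point, and it is exactly where the standing hypothesis on the norm of $\mathbb{R}^{n}$ enters.

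Granting this, I would conclude as follows. One inequality is immediate from $\mathcal{A}\cap B_{\mathcal{L}(^{m}\mathbb{R}^{n})}\subseteq\mathcal{C}$. For the reverse, any $T\in\mathcal{C}$ has the form $T=a^{T}\cdot g$ with $a^{T}\in\mathcal{A}\cap B_{\mathcal{L}(^{m}\mathbb{R}^{n})}$ and $g\in G_{m}^{n}$, so by the invariance $f_{\lambda}(T)=f_{\lambda}(a^{T})\le\max\{f_{\lambda}(S):S\in\mathcal{A}\cap B_{\mathcal{L}(^{m}\mathbb{R}^{n})}\}$; taking the maximum over $T\in\mathcal{C}$ and chaining with the equalities from the second paragraph yields the assertion. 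Apart from the invariance observation, the argument is a routine assembly of the results already established, so I anticipate no further obstacle.
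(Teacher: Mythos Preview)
Your proof is correct and follows essentially the same route as the paper's: reduce to $\mathcal{C}$ via Proposition~\ref{arrmax}, write each element of $\mathcal{C}$ as $a^{S}\cdot g$ with $a^{S}\in\mathcal{A}\cap B_{\mathcal{L}(^{m}\mathbb{R}^{n})}$, and use the invariance $f_{\lambda}(a^{S}\cdot g)=f_{\lambda}(a^{S})$. The only difference is cosmetic---you spell out the convexity of $f_{\lambda}$ and the reason for the invariance (the diagonal entries of $g$ are $\pm1$), whereas the paper leaves these implicit.
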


\begin{proof}
By Proposition \ref{arrmax} we know that%
\[
\max_{T\in B_{\mathcal{L}\left(  ^{m}\mathbb{R}^{n}\right)  }}f_{\lambda
}(T)=\max\left\{  f_{\lambda}(T):T\in\mathcal{C}\right\}  ,
\]
i.e., there is a $T_{0}\in\mathcal{C}$ such that%
\[
\max_{T\in B_{\mathcal{L}\left(  ^{m}\mathbb{R}^{n}\right)  }}f_{\lambda
}(T)=f_{\lambda}(T_{0}).
\]
Since $T_{0}=a^{S}\cdot g$ for some $a^{S}\in\mathcal{A}$ and $g\in G_{m}^{n}$
and
\[
|\langle a^{S},v\rangle|\leq1
\]
for all $v\in V_{m}^{n}$. We have%
\begin{align*}
\max\left\{  f_{\lambda}(T):T\in\mathcal{A}\cap B_{\mathcal{L}(^{m}%
\mathbb{R}^{n})}\right\}   &  \leq\max\left\{  f_{\lambda}(T):T\in
\mathcal{C}\right\} \\
&  =f_{\lambda}(T_{0})\\
&  =f_{\lambda}(a^{S}\cdot g)\\
&  =f_{\lambda}(a^{S})\\
&  \leq\max\left\{  f_{\lambda}(T):T\in\mathcal{A}\cap B_{\mathcal{L}%
(^{m}\mathbb{R}^{n})}\right\}  .
\end{align*}

\end{proof}

\subsection{Classical multilinear inequalities: sharp values}

Let $\mathbb{K}=\mathbb{R}$ or $\mathbb{C}.$ The (classical)
Bohnenblust--Hille inequality, \cite{bh}, asserts that for all $m$--linear
forms $T\colon\mathbb{K}^{n}\times\cdots\times\mathbb{K}^{n}\rightarrow
\mathbb{K}$ and all positive integers $n$,
\begin{equation}
\textstyle\left(  \sum\limits_{j_{1},...,j_{m}=1}^{n}\left\vert T(e_{j_{1}%
},...,e_{j_{m}})\right\vert ^{\frac{2m}{m+1}}\right)  ^{\frac{m+1}{2m}}\leq
B_{m}^{\mathbb{K}}(n)\left\Vert T\right\Vert ,\label{tttt}%
\end{equation}
for an optimal constant $B_{m}^{\mathbb{K}}(n)\geq1,$ and%
\begin{equation}
B_{m}^{\mathbb{K}}(\infty):=\sup_{n}B_{m}^{\mathbb{K}}(n)<\infty.\label{7777}%
\end{equation}

From Proposition \ref{arrmax} we have the following formula for the optimal
constants $B_{m}^{\mathbb{R}}(n):$%

\begin{equation}
B_{m}^{\mathbb{R}}(n)=\max\left\{  T\in\mathcal{C}_{m,n}\right\}  ,
\label{8mz}%
\end{equation}
where $\mathcal{C}_{m,n}$ is the (finite) set created by the elementary
constructive process of Section \ref{CPPP}.

When $m=2$, inequality (\ref{tttt}) recovers the famous Littlewood's $4/3$
inequality, and it is well known that $B_{2}^{\mathbb{R}}(\infty)=\sqrt{2}$.
For $m\geq3$, the precise values of sharp constants $B_{m}^{\mathbb{K}}%
(\infty)$ remain unknown, despite of their intrinsic applications in the case
of real scalars, see \cite{montanaro}.

It follows from (\ref{8mz}), however, that given two positive integers $m,n$
the precise value of $B_{m}^{\mathbb{R}}(n)$ can be fully determined and
formally computed by the constructive method earlier described after a finite
number of elementary steps. The same can be done for any similar inequalities,
like the mixed Littlewood-type inequalities.

\subsection{Classical multilinear inequalities: algebraic properties}

It is appealing to observe that, since the coordinates of extreme points of
$B_{\mathcal{L}\left(  ^{m}\mathbb{R}^{n}\right)  }$ are rational numbers, we
can easily conclude that:

\begin{proposition}
For all positive integers $m,n$, the optimal constants $B_{m}^{\mathbb{R}}(n)$
are algebraic numbers.
\end{proposition}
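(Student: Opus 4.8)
The plan is to combine two facts: first, the formula \eqref{8mz}, which expresses $B_m^{\mathbb{R}}(n)$ as the maximum of the function $f_{\lambda}$ (with $\lambda = 2m/(m+1)$) over the finite set $\mathcal{C}_{m,n}$ of extreme points; and second, the Corollary stating that every extreme point $T \in \mathcal{C}_{m,n}$ has rational coordinates. From these two inputs the conclusion should follow quickly.

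First I would recall that, by \eqref{8mz} together with the preceding Proposition, there exists a single extreme point $T_{0} \in \mathcal{C}_{m,n}$ (equivalently, an element of $\mathcal{A} \cap B_{\mathcal{L}(^{m}\mathbb{R}^{n})}$) at which the maximum is attained, so that
\[
B_{m}^{\mathbb{R}}(n) = \left( \sum_{\mathbf{i} \in [n]^{m}} \left| T_{0}(e_{i_{1}}, \ldots, e_{i_{m}}) \right|^{\frac{2m}{m+1}} \right)^{\frac{m+1}{2m}}.
\]
Next I would invoke the Corollary to conclude that each coefficient $T_{0}(e_{i_{1}},\ldots,e_{i_{m}})$ is a rational number, hence $\left| T_{0}(e_{i_{1}},\ldots,e_{i_{m}}) \right|$ is rational and non-negative. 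Therefore $\left| T_{0}(e_{i_{1}},\ldots,e_{i_{m}}) \right|^{\frac{2m}{m+1}}$ is an algebraic number: a rational number raised to a rational power is a root of a polynomial with integer coefficients (it satisfies $x^{m+1} = q^{2m}$ for the appropriate rational $q$, which can be cleared to an integer-coefficient equation). The finite sum of algebraic numbers is algebraic, and finally raising an algebraic number to the rational power $\tfrac{m+1}{2m}$ again yields an algebraic number, since the field of algebraic numbers is closed under the field operations and under taking roots of its elements. Hence $B_{m}^{\mathbb{R}}(n)$ is algebraic.

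I do not expect any genuine obstacle here; the statement is essentially a bookkeeping corollary of the rationality of the extreme points. The only point requiring a word of care is the closure of the algebraic numbers under the operations used — finite sums, and extraction of the $2m$-th root of an $(m+1)$-th power — but these are standard facts about the field $\overline{\mathbb{Q}}$, so I would simply cite them. One should also note that $B_{m}^{\mathbb{R}}(n) \geq 1 > 0$, so the real $2m$-th root in question is well defined and unambiguous. No new machinery beyond the Corollary and formula \eqref{8mz} is needed.
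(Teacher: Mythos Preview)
Your proposal is correct and follows the same route the paper implicitly takes: the paper simply remarks that the result follows easily from the rationality of the coordinates of extreme points and leaves the details to the reader, and your argument spells out exactly those details. No discrepancy in approach.
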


The above result cannot be straightforwardly extended to the case $n=\infty,$
i.e., we cannot conclude $B_{m}^{\mathbb{R}}(\infty)$ are algebraic numbers.
In what follows though, we will show that when considering $m$-linear forms
defined over $\mathbb{R}^{n}\times\mathbb{R}^{n}\times\mathbb{R}^{2}%
\times\cdots\times\mathbb{R}^{2}$ with $n\geq2^{m-1}$ the sharp constants are
indeed algebraic, and equal $2^{1-\frac{1}{m}}.$ This result provides a
partial solution to the question of whether the constants $B_{m}^{\mathbb{R}%
}(\infty)$ are algebraic or not. Our proof is based on a different set of
tools, which includes the Mixed Littlewood inequality and the Khinchin
inequality; we recall them here for the sake of the readers:

\noindent\textbf{Mixed Littlewood inequality.} For all continuous
$(m+1)$--linear forms $T\colon\mathbb{R}^{n}\times\cdots\times\mathbb{R}%
^{n}\rightarrow\mathbb{R}$ and for all positive integers $n,$ we have%
\begin{equation}
\left(  \sum\limits_{j_{1},...,j_{m}=1}^{n}\left(  \sum\limits_{j_{m+1}=1}%
^{n}\left\vert T(e_{j_{1}},...,e_{j_{m}})\right\vert ^{2}\right)  ^{\frac
{1}{2}\frac{2m}{m+1}}\right)  ^{\frac{m+1}{2m}}\leq M_{m+1}(n)\left\Vert
T\right\Vert \label{hhb}%
\end{equation}
and%
\[
M_{m+1}:=\sup_{n}M_{m+1}(n)<\infty.
\]

\noindent\textbf{Khinchin inequality.}(see \cite{diestel}). For any
$0<q<\infty$, there are positive constants $A_{q}$, $B_{q}$ such that
\[
A_{q}\left(  \sum\limits_{j=1}^{n}|a_{j}|^{2}\right)  ^{\frac{1}{2}}%
\leq\left(  \int_{0}^{1}\left\vert \sum\limits_{j=1}^{n}a_{j}r_{j}%
(t)\right\vert ^{q}dt\right)  ^{\frac{1}{q}}\leq B_{q}\left(  \sum
\limits_{j=1}^{n}|a_{j}|^{2}\right)  ^{\frac{1}{2}},
\]
for any positive integer $n$ and sequence of scalars $(a_{j})_{j=1}^{n}$. Here
$r_{j}$ denote the Rademacher functions. The best constants $A_{q}$ are (see
\cite{diestel}):%
\begin{align*}
A_{q} &  =\sqrt{2}\left(  \frac{\Gamma\left(  \frac{1+q}{2}\right)  }%
{\sqrt{\pi}}\right)  ^{\frac{1}{q}}\text{ if }2>q\geq q_{0}\cong1.8474;\\
A_{q} &  =2^{\frac{1}{2}-\frac{1}{q}}\text{ if }q<q_{0}.
\end{align*}
The number $q_{0}$ above is the unique real scalar satisfying $\Gamma\left(
\frac{q_{0}+1}{2}\right)  =\frac{\sqrt{\pi}}{2}$.\medskip

\begin{lemma}
\label{t11}Let $m\geq1$ and $n\geq2$ be positive integers. For all continuous
$(m+1)$--linear forms $T:\mathbb{R}^{n}\times\cdots\times\mathbb{R}^{n}%
\times\mathbb{R}^{2}\rightarrow\mathbb{R}$ we have%
\begin{equation}
\left(  \sum\limits_{j_{1},...,j_{m}=1}^{n}\left(  \sum\limits_{j_{m+1}=1}%
^{2}\left\vert T(e_{j_{1}},...,e_{j_{m}})\right\vert ^{2}\right)  ^{\frac
{1}{2}\frac{2m}{m+1}}\right)  ^{\frac{m+1}{2m}}\leq2^{\frac{1}{2m}}%
B_{m}(n)\left\Vert T\right\Vert \label{u43}%
\end{equation}
and the constant $2^{\frac{1}{2m}}B_{m}(n)$ is sharp.
\end{lemma}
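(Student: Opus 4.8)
The plan is to establish inequality \eqref{u43} as a ``restricted'' version of the Mixed Littlewood inequality in which the innermost sum runs only over two indices, and then to verify sharpness by exhibiting an explicit extremal form. For the inequality itself I would start from an arbitrary continuous $(m+1)$-linear form $T\colon\mathbb{R}^{n}\times\cdots\times\mathbb{R}^{n}\times\mathbb{R}^{2}\rightarrow\mathbb{R}$ and estimate the left-hand side of \eqref{u43} by comparing the inner $\ell_{2}$-sum over $j_{m+1}\in\{1,2\}$ with an integral average over the Rademacher functions. Concretely, writing $a_{j_{1}\ldots j_{m}}^{(k)} := T(e_{j_{1}},\ldots,e_{j_{m}},e_{k})$ for $k=1,2$, the two-term Khinchin inequality (the case $n=2$, which by the table gives the optimal constant $A_{1}=2^{1/2-1}=\tfrac{1}{\sqrt2}$ since $1<q_{0}$) yields
\[
\left(\sum_{k=1}^{2}|a_{j_{1}\ldots j_{m}}^{(k)}|^{2}\right)^{1/2}\le \sqrt{2}\int_{0}^{1}\Big|\sum_{k=1}^{2}a_{j_{1}\ldots j_{m}}^{(k)}r_{k}(t)\Big|\,dt .
\]
Then, for each fixed $t$, the expression $\sum_{k}a_{j_{1}\ldots j_{m}}^{(k)}r_{k}(t)$ is exactly $T(e_{j_{1}},\ldots,e_{j_{m}},(r_{1}(t),r_{2}(t)))$, i.e.\ the coefficient of an honest $m$-linear form on $\mathbb{R}^{n}$, namely $S_{t} := T(\cdot,\ldots,\cdot,(r_{1}(t),r_{2}(t)))$, whose norm is at most $\|T\|$ because $(r_{1}(t),r_{2}(t))\in B_{\mathbb{R}^{2}}$ in the sup norm.

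The second step is to feed this into the classical Bohnenblust--Hille inequality \eqref{tttt}. Raising the Khinchin estimate to the power $\tfrac{2m}{m+1}$, summing over $j_{1},\ldots,j_{m}$, applying Minkowski's integral inequality to pull the integral outside the $\ell_{2m/(m+1)}$-norm, and then using $\big(\sum_{j_{1},\ldots,j_{m}}|S_{t}(e_{j_{1}},\ldots,e_{j_{m}})|^{2m/(m+1)}\big)^{(m+1)/2m}\le B_{m}(n)\|S_{t}\|\le B_{m}(n)\|T\|$ for each $t$, one obtains
\[
\left(\sum_{j_{1},\ldots,j_{m}=1}^{n}\Big(\sum_{j_{m+1}=1}^{2}|T(e_{j_{1}},\ldots,e_{j_{m}})|^{2}\Big)^{\frac{1}{2}\frac{2m}{m+1}}\right)^{\frac{m+1}{2m}}\le \sqrt{2}\,B_{m}(n)\|T\| .
\]
Since $\sqrt2 = 2^{1/2}$, this is slightly weaker than the claimed constant $2^{1/(2m)}B_{m}(n)$; the sharper bound comes from interpolating the Khinchin step against the trivial $\ell_{\infty}$ bound, or equivalently from observing that one only needs the $L_{q}$-Khinchin constant with $q=\tfrac{2m}{m+1}\ge 1$, for which $A_{q}=2^{1/2-(m+1)/2m}=2^{-1/(2m)}$ when $q<q_{0}$ (which holds for all $m\ge1$ since $\tfrac{2m}{m+1}<2\le$ is comfortably below $q_{0}$ only for small $m$ — this threshold check is the one delicate point). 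Carrying the argument with this $q$ in place replaces the factor $\sqrt2$ by $2^{1/(2m)}$, which is exactly \eqref{u43}.

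For sharpness, the plan is to transfer a near-extremal sequence for $B_{m}(n)$ together with the two-dimensional Khinchin extremizer. Take $m$-linear forms $R_{\varepsilon}$ on $\mathbb{R}^{n}$ with $\|R_{\varepsilon}\|\le1$ and $\big(\sum|R_{\varepsilon}(e_{j_{1}},\ldots,e_{j_{m}})|^{2m/(m+1)}\big)^{(m+1)/2m}\ge B_{m}(n)-\varepsilon$, and define the $(m+1)$-linear form $T(x_{1},\ldots,x_{m},y) := R_{\varepsilon}(x_{1},\ldots,x_{m})\cdot\tfrac{1}{2}(y^{(1)}+y^{(2)})$ — or more precisely a form whose two ``slabs'' $T(\cdot,e_{1})$ and $T(\cdot,e_{2})$ are designed so that the inner $\ell_{2}$ sum over $\{1,2\}$ reproduces the modulus $|R_{\varepsilon}(e_{j_{1}},\ldots,e_{j_{m}})|$ up to the factor that makes Khinchin tight. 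One then checks $\|T\|=\max_{k=1,2}\|T(\cdot,e_{k})\|$ (or the analogous value over the two vertices of $B_{\mathbb{R}^{2}}$) equals something like $1$, while the left side of \eqref{u43} equals $2^{1/(2m)}(B_{m}(n)-\varepsilon)$ up to lower-order terms; letting $\varepsilon\to0$ gives the sharpness. I expect the main obstacle to be exactly the sharpness direction: one must choose the two coefficient slabs so that \emph{both} the Khinchin inequality (in its $L_{2m/(m+1)}$ form, with the $2^{-1/(2m)}$ constant) and the Bohnenblust--Hille inequality are simultaneously saturated, and one must verify $\|T\|$ does not inflate — this requires the Bohnenblust--Hille extremizers on $\mathbb{R}^{n}$ to be compatible with the rank-one tensoring in the last variable, which is plausible but needs care; the inequality direction above is comparatively routine once the correct Khinchin exponent and constant are pinned down.
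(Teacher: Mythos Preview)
Your plan for the inequality direction is essentially the paper's: Khinchin in the last variable followed by Bohnenblust--Hille in the first $m$ variables. The worry you flag about the threshold $q_{0}$ is real if you quote the \emph{general} Khinchin constant $A_{q}=2^{1/2-1/q}$, since $\tfrac{2m}{m+1}>q_{0}$ once $m$ is roughly $12$ or larger. The paper avoids this entirely by computing the two-term case directly: for any $1\le p\le 2$,
\[
\max_{(a,b)\ne 0}\frac{(a^{2}+b^{2})^{1/2}}{\bigl(\tfrac12|a+b|^{p}+\tfrac12|a-b|^{p}\bigr)^{1/p}}=2^{\tfrac{1}{p}-\tfrac12},
\]
so with $p=\tfrac{2m}{m+1}$ the constant is $2^{1/(2m)}$ for every $m$, no threshold needed. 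This closes your ``delicate point'' cleanly.

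The genuine gap is in the sharpness argument. A rank-one tensor $T=R_{\varepsilon}\otimes v$ cannot work: one computes that the left side of \eqref{u43} equals $\|v\|_{2}$ times the Bohnenblust--Hille sum of $R_{\varepsilon}$, while $\|T\|=\|R_{\varepsilon}\|\cdot\|v\|_{1}$, so the ratio is at most $\tfrac{\|v\|_{2}}{\|v\|_{1}}B_{m}(n)\le B_{m}(n)$, strictly below $2^{1/(2m)}B_{m}(n)$ for $m\ge 1$. The paper's construction is genuinely different: starting from a near-extremal $T_{m}$, it builds two \emph{disjointly supported} copies $\widetilde{T_{m}}$ and $\widetilde{\widetilde{T_{m}}}$ (via odd and even coordinates), and sets
\[
R_{m+1}(x^{(1)},\ldots,x^{(m+1)})=\bigl(x_{2}^{(m+1)}-x_{1}^{(m+1)}\bigr)\widetilde{T_{m}}+\bigl(x_{2}^{(m+1)}+x_{1}^{(m+1)}\bigr)\widetilde{\widetilde{T_{m}}}.
\]
Disjoint supports force $|R_{m+1}(e_{j_{1}},\ldots,e_{j_{m}},e_{1})|=|R_{m+1}(e_{j_{1}},\ldots,e_{j_{m}},e_{2})|$ for every multi-index, which is exactly the equality case $|a|=|b|$ in the two-term Khinchin bound; and evaluating at the four vertices $(\pm1,\pm1)$ shows $\|R_{m+1}\|=2\|T_{m}\|$ with no inflation. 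These two features together give the ratio $2^{1/(2m)}(B_{m}(n)-\varepsilon)$. Your proposal correctly anticipates that simultaneous saturation of Khinchin and Bohnenblust--Hille plus norm control is the crux, but the disjoint-copies trick is the missing idea; without it the sharpness claim does not go through.
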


\begin{proof}
The inequality%
\begin{equation}
\left(  \sum\limits_{j_{1},...,j_{m}=1}^{n}\left(  \sum\limits_{j_{m+1}=1}%
^{2}\left\vert T(e_{j_{1}},...,e_{j_{m}})\right\vert ^{2}\right)  ^{\frac
{1}{2}\frac{2m}{m+1}}\right)  ^{\frac{m+1}{2m}}\leq A_{\frac{2m}{m+1}}%
^{-1}B_{m}(n)\left\Vert T\right\Vert \label{est}%
\end{equation}
is a straightforward consequence of the Khinchin inequality; here
$A_{\frac{2m}{m+1}}$ are the associated constants of the Khinchin inequality.
Since for any $1\leq p\leq2$ the maximum of%
\[
f(a,b)=\frac{\left(  a^{2}+b^{2}\right)  ^{1/2}}{\left(  \frac{1}{2}\left\vert
a+b\right\vert ^{p}+\frac{1}{2}\left\vert a-b\right\vert ^{p}\right)  ^{1/p}}%
\]
is $2^{\frac{1}{p}-\frac{1}{2}},$ in our case the constants of the Khinchin
inequality can be taken as $2^{\frac{m+1}{2m}-\frac{1}{2}},$ i.e.,
$2^{\frac{1}{2m}}$ (recall that we are dealing with continuous $m+1$--linear
forms $T:\mathbb{R}^{n}\times\cdots\times\mathbb{R}^{n}\times\mathbb{R}%
^{2}\rightarrow\mathbb{R}$). Thus
\[
\left(  \sum\limits_{j_{1},...,j_{m}=1}^{n}\left(  \sum\limits_{j_{m+1}=1}%
^{2}\left\vert T(e_{j_{1}},...,e_{j_{m}})\right\vert ^{2}\right)  ^{\frac
{1}{2}\frac{2m}{m+1}}\right)  ^{\frac{m+1}{2m}}\leq2^{\frac{1}{2m}}%
B_{m}(n)\left\Vert T\right\Vert .
\]
We just need to prove that the constant $2^{\frac{1}{2m}}B_{m}(n)$ is sharp.

From now on, for any continuous $m$-linear form $T_{m}:\mathbb{R}^{n}%
\times\cdots\times\mathbb{R}^{n}\rightarrow\mathbb{R}$ we define%
\begin{align*}
\widetilde{T_{m}}(x^{(1)},...,x^{(m)})  &  =T_{m}(z^{(1)},...,z^{(m)}),\\
\widetilde{\widetilde{T_{m}}}(x^{(1)},...,x^{(m)})  &  =T_{m}(w^{(1)}%
,...,w^{(m)}),
\end{align*}
where, for all $k=1,....,m,$ we consider%
\begin{align*}
z^{(k)}  &  =(x_{1}^{(k)},x_{3}^{(k)},x_{5}^{(k)},...),\\
w^{(k)}  &  =(x_{2}^{(k)},x_{4}^{(k)},x_{6}^{(k)},...).
\end{align*}
Note that%
\[
\left\Vert \widetilde{\widetilde{T_{m}}}\right\Vert =\left\Vert \widetilde
{T_{m}}\right\Vert =\left\Vert T_{m}\right\Vert .
\]

Let $\varepsilon>0$ and $T_{m}:\mathbb{R}^{n}\times\cdots\times\mathbb{R}%
^{n}\rightarrow\mathbb{R}$ be such that%
\begin{equation}
\left(  \sum\limits_{j_{1},...,j_{m}=1}^{n}\left\vert T_{m}(e_{j_{1}%
},...,e_{j_{m}})\right\vert ^{\frac{2m}{m+1}}\right)  ^{\frac{m+1}{2m}%
}>\left(  B_{m}^{\mathbb{R}}(n)-\varepsilon\right)  \left\Vert T_{m}%
\right\Vert . \label{1122}%
\end{equation}
Define the $m+1$-linear operator $R_{m+1}:\mathbb{R}^{n}\times\cdots
\times\mathbb{R}^{n}\times\mathbb{R}^{2}\rightarrow\mathbb{R}$ by%
\begin{align*}
R_{m+1}(x^{(1)},...,x^{(m+1)})  &  =\left(  x_{2}^{(m+1)}-x_{1}^{(m+1)}%
\right)  \widetilde{T_{m}}(x^{(1)},...,x^{(m)})\\
&  +\left(  x_{2}^{(m+1)}+x_{1}^{(m+1)}\right)  \widetilde{\widetilde{T_{m}}%
}(x^{(1)},...,x^{(m)}).
\end{align*}
By the definition of $R_{m+1}$, we have%
\[
\left\Vert R_{m+1}\right\Vert =\left\Vert 2T_{m}\right\Vert
\]
and we can also note that for all $e_{j_{1},...,}e_{j_{m}},$ we have%
\[
\left\vert R_{m+1}(e_{j_{1}},...,e_{j_{m}},e_{1})\right\vert =\left\vert
R_{m+1}(e_{j_{1}},...,e_{j_{m}},e_{2})\right\vert .
\]
Since we are using just two coordinates of the last variable and since, for
any $1\leq p\leq2$, the maximum of%
\[
f(a,b)=\frac{\left(  a^{2}+b^{2}\right)  ^{1/2}}{\left(  \frac{1}{2}\left\vert
a+b\right\vert ^{p}+\frac{1}{2}\left\vert a-b\right\vert ^{p}\right)  ^{1/p}}%
\]
is $2^{\frac{1}{p}-\frac{1}{2}}$ (it is attained when $\left\vert a\right\vert
=\left\vert b\right\vert >0$), we have%
\begin{align*}
&  \left(  \sum\limits_{j_{1},...,j_{m}=1}^{n}\left(  \sum\limits_{j_{m+1}%
=1}^{2}\left\vert R_{m+1}(e_{j_{1}},...,e_{j_{m+1}})\right\vert ^{2}\right)
^{\frac{1}{2}\frac{2m}{m+1}}\right)  ^{\frac{m+1}{2m}}\\
&  =2^{\frac{m+1}{2m}-\frac{1}{2}}\left(  \sum\limits_{j_{1},...,j_{m}=1}%
^{n}\sum\limits_{\left(  \varepsilon_{1},\varepsilon_{2}\right)
\in\{(1,-1),(1,1)\}}\frac{1}{2}\left\vert R_{m+1}(e_{j_{1}},...,e_{j_{m}%
},\varepsilon_{1}e_{1}+\varepsilon_{2}e_{2})\right\vert ^{\frac{2m}{m+1}%
}\right)  ^{\frac{m+1}{2m}}\\
&  =2^{\frac{1}{2m}}\left(  \frac{1}{2}\sum\limits_{j_{1},...,j_{m}=1}%
^{n}\left\vert 2\widetilde{\widetilde{T_{m}}}(e_{j_{1}},...,e_{j_{m}%
})\right\vert ^{\frac{2m}{m+1}}+\frac{1}{2}\sum\limits_{j_{1},...,j_{m}=1}%
^{n}\left\vert 2\widetilde{T_{m}}(e_{j_{1}},...,e_{j_{m}})\right\vert
^{\frac{2m}{m+1}}\right)  ^{\frac{m+1}{2m}}.
\end{align*}
It is obvious that both $2\widetilde{\widetilde{T_{m}}}$ and $2\widetilde
{T_{m}}$ also satisfy (\ref{1122}). Thus%
\begin{align*}
&  \left(  \sum\limits_{j_{1},...,j_{m}=1}^{n}\left(  \sum\limits_{j_{m+1}%
=1}^{2}\left\vert R_{m+1}(e_{j_{1}},...,e_{j_{m+1}})\right\vert ^{2}\right)
^{\frac{1}{2}\frac{2m}{m+1}}\right)  ^{\frac{m+1}{2m}}\\
&  >2^{\frac{1}{2m}}\left(  \frac{1}{2}\left(  \left(  B_{m}^{\mathbb{R}%
}(n)-\varepsilon\right)  \left\Vert 2T_{m}\right\Vert \right)  ^{\frac
{2m}{m+1}}+\frac{1}{2}\left(  \left(  B_{m}^{\mathbb{R}}(n)-\varepsilon
\right)  \left\Vert 2T_{m}\right\Vert \right)  ^{\frac{2m}{m+1}}\right)
^{\frac{m+1}{2m}}\\
&  =2^{\frac{1}{2m}}\left(  \frac{1}{2}\left(  \left(  B_{m}^{\mathbb{R}%
}(n)-\varepsilon\right)  \left\Vert R_{m+1}\right\Vert \right)  ^{\frac
{2m}{m+1}}+\frac{1}{2}\left(  \left(  B_{m}^{\mathbb{R}}(n)-\varepsilon
\right)  \left\Vert R_{m+1}\right\Vert \right)  ^{\frac{2m}{m+1}}\right)
^{\frac{m+1}{2m}}\\
&  =2^{\frac{1}{2m}}\left(  B_{m}^{\mathbb{R}}(n)-\varepsilon\right)
\left\Vert R_{m+1}\right\Vert .
\end{align*}
Letting $\varepsilon\rightarrow0$ we thus conclude that $2^{\frac{1}{2m}}%
B_{m}^{\mathbb{R}}(n)$ is sharp.
\end{proof}

Suppose that now we have $m$-linear forms defined in $\mathbb{R}^{n}%
\times\mathbb{R}^{n}\times\mathbb{R}^{2}\times\cdots\times\mathbb{R}^{2}$ with
$n\geq2^{m-1}.$ The proof that the sharp constants are $2^{1-\frac{1}{m}}$ is
now a straightforward consequence of the H\"{o}lder inequality for mixed sums
combined with (\ref{u43}) and the following simple inequality:%
\begin{equation}
\left(  \sum\limits_{j_{1},...,j_{m-1}}\left(  \sum\limits_{j_{m}=1}%
^{2}\left\vert T(e_{j_{1}},...,e_{j_{m}})\right\vert ^{1}\right)  ^{\frac
{1}{1}2}\right)  ^{\frac{1}{2}}\leq\sqrt{2}\left\Vert T\right\Vert \label{i8}%
\end{equation}
for all $T:\mathbb{R}^{n_{1}}\times\cdots\times\mathbb{R}^{n_{m-1}}%
\times\mathbb{R}^{2}\rightarrow\mathbb{R}$. Considering the strongly
non-symmetric $m$-linear forms used in the proof of \cite[Theorem 4.1]{pt} we
easily prove that the estimates are sharp.

\subsection{The case of complex scalars\label{fini}}

The case of the optimal Bohnenblust--Hille constants for complex scalars is
obviously not encompassed by the previous techniques. The main point is that
the geometry of the closed unit ball $B_{\mathcal{L}\left(  ^{m}\mathbb{C}%
^{n}\right)  }$ is rather different and essentially unknown. In this
subsection, however, we tackle R. Blei's problem concerning sharp estimates
for complex inequalities; more precisely, Orlicz's, Littlewood's $\left(
\ell_{1},\ell_{2}\right)  $, and Littlewood's $4/3$ inequalities:

For each positive integer $n$, following the notation used by \cite{blei}, let
$\kappa_{O}^{\mathbb{K}}(n)$, $\kappa_{L}^{\mathbb{K}}(n)$, $\kappa
_{4/3}^{\mathbb{K}}(n)$ be extrema constants for the following inequalities:%
\[
\left(  \sum\limits_{i=1}^{n}\left(  \sum\limits_{j=1}^{n}\left\vert
T(e_{i},e_{j})\right\vert \right)  ^{2}\right)  ^{1/2}\leq\kappa
_{O}^{\mathbb{K}}(n)\left\Vert T\right\Vert ,
\]%
\[
\sum\limits_{i=1}^{n}\left(  \sum\limits_{j=1}^{n}\left\vert T(e_{i}%
,e_{j})\right\vert ^{2}\right)  ^{1/2}\leq\kappa_{L}^{\mathbb{K}}(n)\left\Vert
T\right\Vert ,
\]
and
\[
\left(  \sum\limits_{i,j=1}^{n}\left\vert T(e_{i},e_{j})\right\vert
^{4/3}\right)  ^{3/4}\leq\kappa_{4/3}^{\mathbb{K}}(n)\left\Vert T\right\Vert
\]
for all bilinear forms $T\colon\mathbb{C}^{n}\times\mathbb{C}^{n}%
\rightarrow\mathbb{C}$. Classical inequalities, see \cite{blei, Lit, or}, due
to Orlicz and Littlewood assert that
\begin{align*}
\kappa_{O}^{\mathbb{C}}(\infty)  &  :=\lim_{n\rightarrow\infty}\kappa
_{O}^{\mathbb{C}}(n)<\infty,\\
\kappa_{L}^{\mathbb{C}}(\infty)  &  :=\lim_{n\rightarrow\infty}\kappa
_{L}^{\mathbb{C}}(n)<\infty,\\
\kappa_{4/3}^{\mathbb{C}}(\infty)  &  :=\lim_{n\rightarrow\infty}\kappa
_{4/3}^{\mathbb{C}}(n)<\infty.
\end{align*}

The exact values of $\kappa_{O}^{\mathbb{C}}(n)$ and $\kappa_{L}^{\mathbb{C}%
}(n)$ are stated as an open problem in \cite[Page 31]{blei}. We solve this
problem here for $n=2$, with the aid of techniques introduced by Jameson,
\cite{jamilson}, concerning unital bilinear forms when dealing with a specific
form of two-dimensional Grothendieck's inequality. We will ultimately prove:

\begin{theorem}
\label{zqq}$\kappa_{O}^{\mathbb{C}}(2)=\kappa_{L}^{\mathbb{C}}(2)=\kappa
_{4/3}^{\mathbb{C}}(2)=1.$
\end{theorem}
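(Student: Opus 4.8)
The plan is to prove the three equalities by establishing the chain of inequalities $1 \le \kappa_{4/3}^{\mathbb{C}}(2) \le \kappa_{L}^{\mathbb{C}}(2) \le \kappa_{O}^{\mathbb{C}}(2) \le 1$. The lower bound $\kappa_{4/3}^{\mathbb{C}}(2)\ge 1$ is immediate: testing against the bilinear form $T(x,y)=x_1y_1$ one gets $\left(\sum_{i,j}|T(e_i,e_j)|^{4/3}\right)^{3/4}=1=\|T\|$, and the same form witnesses the lower bounds for $\kappa_L^{\mathbb{C}}(2)$ and $\kappa_O^{\mathbb{C}}(2)$. The two middle inequalities $\kappa_{4/3}^{\mathbb{C}}(2)\le\kappa_L^{\mathbb{C}}(2)\le\kappa_O^{\mathbb{C}}(2)$ are purely formal consequences of the nesting of $\ell_p$-norms: for a $2\times 2$ array, $\left(\sum_{i,j}|a_{ij}|^{4/3}\right)^{3/4}\le\sum_i\left(\sum_j|a_{ij}|^2\right)^{1/2}$ by applying the inclusion $\ell_{4/3}\hookrightarrow$ (the mixed $\ell_1(\ell_2)$ norm dominates $\ell_{4/3}$), and $\sum_i\left(\sum_j|a_{ij}|^2\right)^{1/2}\le\left(\sum_i\left(\sum_j|a_{ij}|^2\right)^{1/2\cdot 2}\right)^{1/2}$... more carefully, $\sum_i c_i \le \sqrt{2}\,(\sum_i c_i^2)^{1/2}$ would go the wrong way, so one instead uses $(\sum_i c_i^2)^{1/2}\le \sum_i c_i$ to get $\kappa_L \le \kappa_O$ only after reconciling the exponents; the correct elementary comparison is $\|(c_1,c_2)\|_{\ell_1}\ge\|(c_1,c_2)\|_{\ell_2}$, which yields $\kappa_{O}^{\mathbb{C}}(2)\ge\kappa_{L}^{\mathbb{C}}(2)$, and $\left(\sum|a_{ij}|^{4/3}\right)^{3/4}\le\sum_i\bigl(\sum_j|a_{ij}|^2\bigr)^{1/2}$ is the standard mixed-norm inequality giving $\kappa_{L}^{\mathbb{C}}(2)\ge\kappa_{4/3}^{\mathbb{C}}(2)$.

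The crux is therefore the single upper bound $\kappa_O^{\mathbb{C}}(2)\le 1$: for every bilinear $T\colon\mathbb{C}^2\times\mathbb{C}^2\to\mathbb{C}$ with $\|T\|\le 1$ one must show $\left(\sum_{i=1}^2\left(\sum_{j=1}^2|T(e_i,e_j)|\right)^2\right)^{1/2}\le 1$. Writing $T$ as a $2\times 2$ complex matrix $(a_{ij})$ with $\|T\|=\sup_{\|x\|_\infty,\|y\|_\infty\le 1}|x^{t}(a_{ij})y|\le 1$, the claim is that $\left(\bigl(|a_{11}|+|a_{12}|\bigr)^2+\bigl(|a_{21}|+|a_{22}|\bigr)^2\right)^{1/2}\le 1$. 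The plan is to invoke Jameson's analysis of unital bilinear forms and the two-dimensional form of Grothendieck's inequality cited as \cite{jamilson}: after a diagonal unimodular change of variables (which preserves $\|T\|$ and all the quantities above, since $|a_{ij}|$ are unchanged) one reduces to the case where the relevant entries are nonnegative reals, and then the extremal configuration is governed by the structure theorem for norm-one bilinear forms on $\ell_\infty^2\times\ell_\infty^2$. Concretely, $\|T\|\ge\frac12\sum_{\varepsilon,\delta\in\{\pm1\}}$ (signed combinations) shows that the row sums $|a_{i1}|+|a_{i2}|$ cannot both be large, and a direct optimization over the finite extreme-point set of $B_{\mathcal L(^2\mathbb{C}^2)}$ — or rather its real trace — pins the maximum of the left-hand side at exactly $1$, attained by rank-one forms like $T(x,y)=x_1y_1$.

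The main obstacle I anticipate is precisely this last optimization over complex $2\times2$ contractions, because unlike the real case treated in Sections 2–4 the complex unit ball $B_{\mathcal L(^2\mathbb{C}^2)}$ is not polyhedral, so one cannot simply enumerate finitely many extreme points. This is why the argument must route through Jameson's unital-bilinear-form techniques and the specific two-dimensional Grothendieck-type inequality rather than through Theorem~\ref{arrimp}: one needs the sharp structural description of which complex matrices are norm-one bilinear forms on $\ell_\infty^2$. I expect the proof to proceed by first normalizing $T$ (using unimodular scalings of the two coordinate axes to make $a_{11}\ge 0$ and, say, $a_{12},a_{21}\ge 0$), then writing the four test values $T(\pm e_1\pm e_2, \pm e_1 \pm e_2)$... more precisely $|T(x,y)|\le 1$ at the $4\times 4$ grid of sign vectors, and extracting from these sixteen scalar inequalities — via an averaging / convexity argument of the kind used in Jameson's paper — the single quadratic constraint $\bigl(|a_{11}|+|a_{12}|\bigr)^2+\bigl(|a_{21}|+|a_{22}|\bigr)^2\le 1$. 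Assembling the chain with the lower bounds then closes the proof.
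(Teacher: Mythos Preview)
Your chain of inequalities is set up in the wrong direction, and this is a genuine structural gap rather than a cosmetic one. The comparison $\|(c_1,c_2)\|_{\ell_1}\ge\|(c_1,c_2)\|_{\ell_2}$ does \emph{not} yield $\kappa_L^{\mathbb C}(2)\le\kappa_O^{\mathbb C}(2)$: with $c_i=(\sum_j|a_{ij}|^2)^{1/2}$ you get Littlewood's left-hand side $=\|c\|_1\ge\|c\|_2=(\sum_{i,j}|a_{ij}|^2)^{1/2}$, which is not the Orlicz left-hand side $(\sum_i(\sum_j|a_{ij}|)^2)^{1/2}$. In fact neither pointwise inequality between the two left-hand sides holds (try $a=\tfrac12\left(\begin{smallmatrix}1&1\\0&0\end{smallmatrix}\right)$ versus $a=\tfrac12\left(\begin{smallmatrix}1&0\\0&1\end{smallmatrix}\right)$). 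The inequality that \emph{is} available is the reverse one,
\[
\Bigl(\sum_j\bigl(\sum_i|a_{ij}|\bigr)^2\Bigr)^{1/2}\le\sum_i\bigl(\sum_j|a_{ij}|^2\bigr)^{1/2},
\]
which is just the triangle inequality in $\ell_2$ (Minkowski) and, together with symmetry in $T\leftrightarrow T^t$, gives $\kappa_O^{\mathbb C}(2)\le\kappa_L^{\mathbb C}(2)$. So the crux must be $\kappa_L^{\mathbb C}(2)\le 1$, not $\kappa_O^{\mathbb C}(2)\le 1$; even if you succeeded in proving the latter directly, it would only give $\kappa_L^{\mathbb C}(2)\le\sqrt 2$.

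The paper attacks $\kappa_L^{\mathbb C}(2)\le 1$ exactly through Jameson's structure theorem, but in a much more concrete way than your sketch suggests: every unital bilinear form on $\mathbb C^2\times\mathbb C^2$ is parametrized by five \emph{real} numbers $a,b,c,d,h$ with $a_{11}=a+ih$, $a_{12}=b-ih$, $a_{21}=c-ih$, $a_{22}=d+ih$, subject to $a+b+c+d=1$, nonnegativity of the six pairwise sums, and $h^2\le bcd+acd+abd+abc$. The Littlewood quantity becomes $f(a,b,c,d,h)=(a^2+b^2+2h^2)^{1/2}+(c^2+d^2+2h^2)^{1/2}$, and a Karush--Kuhn--Tucker analysis on this explicit constrained problem shows $\max f=1$. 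Your proposal to extract the bound from ``sixteen scalar inequalities at the $\pm 1$ grid'' is a real-scalar reflex that does not survive in the complex setting, where the extreme points of $B_{\ell_\infty^2(\mathbb C)}$ form a torus; this is precisely why the argument must pass through Jameson's parametrization and a genuine constrained-optimization step rather than a finite check. Once $\kappa_L^{\mathbb C}(2)=1$ is established, $\kappa_O^{\mathbb C}(2)=1$ follows from Minkowski as above, and $\kappa_{4/3}^{\mathbb C}(2)=1$ from H\"older, matching the part of your outline that is correct.
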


\begin{proof}
Let $A,B$ be complex $C^{\ast}$-algebras with identities $e_{A},e_{B}$.
According to \cite{jamilson} we say that a bilinear form $V:$ $A\times
B\rightarrow\mathbb{C}$ is unital if
\[
V(e_{A},e_{B})=\Vert V\Vert=1.
\]
Note that if $A,B$ are finite-dimensional spaces and $T$ is any bilinear form
with $\Vert T\Vert=1$, then there will be unitary elements $x_{0}\in
A,y_{0}\in B$ such that $T(x_{0},y_{0})=1$, and then a unital form $V$ is
obtained by defining
\begin{equation}
V(x,y)=T(x_{0}x,y_{0}y). \label{mmm}%
\end{equation}
In fact, we have%
\[
V(e_{A},e_{B})=T(x_{0}e_{A},y_{0}e_{B})=T(x_{0},y_{0})=1,
\]%
\[
\left\Vert V(x,y)\right\Vert =\left\Vert T(x_{0}x,y_{0}y)\right\Vert
\leq\left\Vert T\right\Vert \left\Vert x_{0}x\right\Vert \left\Vert
y_{0}y\right\Vert \leq\left\Vert T\right\Vert \left\Vert x\right\Vert
\left\Vert y\right\Vert
\]
and thus $V(e_{A},e_{B})=\Vert V\Vert=1.$

Recall that $\mathbb{C}^{2}$ is a $C^{\ast}$-algebra with product $xy=\left(
x_{1}x_{2},y_{1}y_{2}\right)  $ and unit $e=e_{1}+e_{2}.$ Let $T\colon
\mathbb{C}^{2}\times\mathbb{C}^{2}\rightarrow\mathbb{C}$ be a bilinear form
with $\Vert T\Vert=1$. Then, by the Krein--Milman theorem there are extreme
elements of the closed unit ball of $\ell_{\infty}^{2},$ denoted by
$x_{0}=(\alpha_{1},\alpha_{2})$ and $y_{0}=(\beta_{1},\beta_{2})\in
\ell_{\infty}^{2}$ such that
\[
T(x_{0},y_{0})=\Vert T\Vert=1.
\]
It is well known that the extrema elements of the closed unit ball of
$\mathbb{C}^{2}$ have all coordinates with modulo $1$, see for instance
\cite[page 384]{diestel}. Hence $|\alpha_{i}|=|\beta_{j}|=1$, for all
$i,j\in\{1,2\}.$ Let is define the unital bilinear form $V$
\[
V(x,y)=T(x_{0}x,y_{0}y).
\]
One notes that
\begin{equation}
\sum\limits_{i=1}^{2}\left(  \sum\limits_{j=1}^{2}\left\vert V(e_{i}%
,e_{j})\right\vert ^{2}\right)  ^{1/2}=\sum\limits_{i=1}^{2}\left(
\sum\limits_{j=1}^{2}\left\vert T(e_{i},e_{j})\right\vert ^{2}\right)  ^{1/2};
\label{qqq}%
\end{equation}
indded
\begin{align*}
\sum\limits_{i=1}^{2}\left(  \sum\limits_{j=1}^{2}\left\vert V(e_{i}%
,e_{j})\right\vert ^{2}\right)  ^{1/2}  &  =\sum\limits_{i=1}^{2}\left(
\sum\limits_{j=1}^{2}\left\vert T(x_{0}e_{i},y_{0}e_{j})\right\vert
^{2}\right)  ^{1/2}\\
&  =\sum\limits_{i=1}^{2}\left(  \sum\limits_{j=1}^{2}\left\vert \alpha
_{i}\beta_{j}T(e_{i},e_{j})\right\vert ^{2}\right)  ^{1/2}=\sum\limits_{i=1}%
^{2}\left(  \sum\limits_{j=1}^{2}\left\vert T(e_{i},e_{j})\right\vert
^{2}\right)  ^{1/2}.
\end{align*}
Equality (\ref{qqq}), combined with the previous arguments, yields
\[
\sum\limits_{i=1}^{2}\left(  \sum\limits_{j=1}^{2}\left\vert T(e_{i}%
,e_{j})\right\vert ^{2}\right)  ^{1/2}\leq C\left\Vert T\right\Vert ,
\]
for all bilinear forms $T\colon\mathbb{C}^{2}\times\mathbb{C}^{2}%
\rightarrow\mathbb{C}$ with $\Vert T\Vert=1$ if, and only if,%
\[
\sum\limits_{i=1}^{2}\left(  \sum\limits_{j=1}^{2}\left\vert V(e_{i}%
,e_{j})\right\vert ^{2}\right)  ^{1/2}\leq C\left\Vert V\right\Vert
\]
for all unital bilinear forms $V\colon\mathbb{C}^{2}\times\mathbb{C}%
^{2}\rightarrow\mathbb{C}$ given by the method (\ref{mmm}). In conclusion, as
to understand the sharp constant problem -- objective of current study -- it
suffices to restrict the analysis to unital bilinear forms. Next we recall two
important pieces of information, namely \cite[Lemma 2.3]{jamilson} and
\cite[Theorem 1]{jamilson}, listed below for the readers' convenience:

(1) Any unital bilinear form $T\colon\mathbb{C}^{2}\times\mathbb{C}%
^{2}\rightarrow\mathbb{C}$ is of the form
\[
T(x,y)=\left(  a+ih\right)  x_{1}y_{1}+\left(  b-ih\right)  x_{1}%
y_{2}+(c-hi)x_{2}y_{1}+(d+hi)x_{2}y_{2},
\]
where each of $a+b,c+d,a+c,b+d,a+d,b+c$ is non-negative and $a+b+c+d=1$;

(2) \label{caracunit} Let $T\colon\mathbb{C}^{2}\times\mathbb{C}%
^{2}\rightarrow\mathbb{C}$ be a bilinear form given by%
\[
T(x,y)=\left(  a+ih\right)  x_{1}y_{1}+\left(  b-ih\right)  x_{1}%
y_{2}+(c-hi)x_{2}y_{1}+(d+hi)x_{2}y_{2}.
\]
Then $T$ is unital if and only if the following conditions hold:

(i) $a+b+c+d=1$;

(ii) each of $a+b,c+d,a+c,b+d,a+d,b+c$ is non-negative;

(iii) $h^{2}\leq bcd+acd+abd+abc$.

The above results allow us to re-state Blei's problem of finding
$L_{2}^{\mathbb{C}}(2)$ as an optimization problem:

Maximize the function $f\colon\mathbb{R}^{4}\rightarrow\lbrack0,\infty)$ given
by
\[
f(a,b,c,d,h)=\left(  a^{2}+b^{2}+2h^{2}\right)  ^{1/2}+\left(  c^{2}%
+d^{2}+2h^{2}\right)  ^{1/2}%
\]
when subject to the constrains
\[%
\begin{array}
[c]{ccccc}%
g_{1}(a,b,c,d,h) & = & -a-b & \leq & 0,\\
g_{2}(a,b,c,d,h) & = & -c-d & \leq & 0,\\
g_{3}(a,b,c,d,h) & = & -a-c & \leq & 0,\\
g_{4}(a,b,c,d,h) & = & -b-d & \leq & 0,\\
g_{5}(a,b,c,d,h) & = & -a-d & \leq & 0,\\
g_{6}(a,b,c,d,h) & = & -b-c & \leq & 0,\\
g_{7}(a,b,c,d,h) & = & h^{2}-(bcd+acd+abd+abc) & \leq & 0,\\
t(a,b,c,d,h) & = & a+b+c+d-1 & = & 0.
\end{array}
\]
Applying Karush--Kuhn--Tucker Theorem one finds the maximum of $f$ over that
set is precisely $1$, and hence we have proven
\begin{equation}
L_{2}^{\mathbb{C}}(2)=1. \label{vv}%
\end{equation}
Since
\[
\left(  \sum\limits_{j=1}^{n}\left(  \sum\limits_{i=1}^{n}\left\vert
T(e_{i},e_{j})\right\vert \right)  ^{2}\right)  ^{1/2}\leq\sum\limits_{i=1}%
^{2}\left(  \sum\limits_{j=1}^{2}\left\vert T(e_{i},e_{j})\right\vert
^{2}\right)  ^{1/2},
\]
by (\ref{vv}) and symmetry we have
\begin{equation}
1\leq\kappa_{O}^{\mathbb{C}}(2)\leq\kappa_{L}^{\mathbb{C}}(2)\leq1.
\label{111}%
\end{equation}
The H\"{o}lder inequality combined with (\ref{111}) gives us
\[
\kappa_{4/3}^{\mathbb{C}}(2)=1,
\]
which finally concludes the proof of Theorem \ref{zqq}.
\end{proof}

\subsection{Grothendieck's constants}

Let $K_{G}^{(m)}(d)$ be the optimal constant such that
\[
\left\vert
%TCIMACRO{\tsum \limits_{i=1}^{m}}%
%BeginExpansion
{\textstyle\sum\limits_{i=1}^{m}}
%EndExpansion%
%TCIMACRO{\tsum \limits_{j=1}^{m}}%
%BeginExpansion
{\textstyle\sum\limits_{j=1}^{m}}
%EndExpansion
a_{ij}\left\langle x_{i},y_{j}\right\rangle \right\vert \leq K_{G}%
^{(m)}(d)\max\left\{  \left\vert
%TCIMACRO{\tsum \limits_{i=1}^{m}}%
%BeginExpansion
{\textstyle\sum\limits_{i=1}^{m}}
%EndExpansion%
%TCIMACRO{\tsum \limits_{j=1}^{m}}%
%BeginExpansion
{\textstyle\sum\limits_{j=1}^{m}}
%EndExpansion
a_{ij}s_{i}t_{j}\right\vert :\left\vert s_{i}\right\vert \leq1,\left\vert
t_{j}\right\vert \leq1\right\}
\]
for all $d$-dimensional real Hilbert spaces $H$, all unit vectors
$x_{i},...,x_{m},y_{1},...,y_{m}\in H$ and all $m\times m$ scalar matrices
$a_{ij}$. Denoting%
\[
K_{G}(d):=\sup_{m}K_{G}^{(m)}(d),
\]
Grothendieck's theorem asserts that%
\[
K_{G}:=\sup_{d}K_{G}(d)<\infty.
\]
{For a detailed survey on the Grothendieck theorem we refer to \cite{pisier}}.
The constants $K_{G}$, $K_{G}(d)$ and $K_{G}^{(m)}(d)$ are, in general,
unknown (see, for instance, \cite{finch}) and important in physical problems
(see \cite{hua} and the references therein).

The problem of finding truncated sharp constants can be re-written as%
\begin{equation}
\underset{\{x_{i}\}_{i=1}^{m},\{y_{j}\}_{j=1}^{m}\subset\mathbb{S}^{d-1}}%
{\max}\left\vert \sum_{i=1}^{m}\sum_{j=1}^{m}a_{ij}\langle x_{i},y_{j}%
\rangle\right\vert \leq K_{G}^{(m)}(d)\underset{\left\vert s_{i}\right\vert
\leq1,\left\vert t_{j}\right\vert \leq1}{\max}\left\vert \sum_{i=1}^{m}%
\sum_{j=1}^{m}a_{ij}t_{i}s_{j}\right\vert , \label{7000}%
\end{equation}
where $\mathbb{S}^{d-1}=\{x\in\mathbb{R}^{d}:\sum_{i=1}^{d}x_{i}^{2}=1\}$
where $d$ is the dimension of the Hilbert space.

Another way to interpret (\ref{7000}) is by saying that for any positive
integers $m,d$ and any bilinear form $T:\mathbb{R}^{m}\times\mathbb{R}%
^{m}\rightarrow\mathbb{R}$ there holds
\begin{equation}
\underset{\{x_{i}\}_{i=1}^{m},\{y_{j}\}_{j=1}^{m}\subset\mathbb{S}^{d-1}}%
{\max}\left\vert \sum_{i=1}^{m}\sum_{j=1}^{m}T(e_{i},e_{j})\langle x_{i}%
,y_{j}\rangle\right\vert \leq K_{G}^{(m)}(d)\Vert T\Vert, \label{0099}%
\end{equation}
as
\[
\Vert T\Vert=\underset{\left\vert s_{i}\right\vert \leq1,\left\vert
t_{j}\right\vert \leq1}{\max}\left\vert T\left(  \sum_{i=1}^{m}t_{i}e_{i}%
,\sum_{j=1}^{m}s_{j}e_{j}\right)  \right\vert =\underset{\left\vert
s_{i}\right\vert \leq1,\left\vert t_{j}\right\vert \leq1}{\max}\left\vert
\sum_{i=1}^{m}\sum_{j=1}^{m}T\left(  e_{i},e_{j}\right)  t_{i}s_{j}\right\vert
.
\]
By (\ref{0099}) it is obvious that
\[
K_{G}^{(m)}(d)=\underset{\Vert T\Vert\leq1}{\sup}\left(  \underset
{\{x_{i}\}_{i=1}^{m},\{y_{j}\}_{j=1}^{m}\subset\mathbb{S}^{d-1}}{\max
}\left\vert \sum_{i=1}^{m}\sum_{j=1}^{m}T(e_{i},e_{j})\langle x_{i}%
,y_{j}\rangle\right\vert \right)  .
\]
Thus, finding the sharp values of $K_{G}^{(m)}(d)$ is equivalent to finding
the maximum of the function%
\begin{align*}
f_{m,d}  &  :B_{\mathcal{L}(^{2}\mathbb{R}^{m};\mathbb{R})}\rightarrow
\mathbb{R}\\
f_{m,d}(T)  &  =\underset{\{x_{i}\}_{i=1}^{m},\{y_{j}\}_{j=1}^{m}%
\subset\mathbb{S}^{d-1}}{\max}\left\vert \sum_{i=1}^{m}\sum_{j=1}^{m}%
T(e_{i},e_{j})\langle x_{i},y_{j}\rangle\right\vert
\end{align*}
where $B_{\mathcal{L}(^{2}\mathbb{R}^{m};\mathbb{R})}$ denotes the closed unit
ball of the space of bilinear forms $T\colon\mathbb{R}^{m}\times\mathbb{R}%
^{m}\rightarrow\mathbb{R}$.

The following lemma is straightforward:

\begin{lemma}
Let $m,d$ be positive integers. The function $f_{m,d}$ is continuous and convex.
\end{lemma}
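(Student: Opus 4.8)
The plan is to recognize $f_{m,d}$ as the pointwise supremum of a compact family of absolute values of \emph{linear} functionals on the finite–dimensional space $\mathcal{L}(^{2}\mathbb{R}^{m};\mathbb{R})$, after which convexity and continuity both follow from standard soft arguments. First I would fix notation: set $Z:=(\mathbb{S}^{d-1})^{m}\times(\mathbb{S}^{d-1})^{m}$, a compact subset of a Euclidean space, and for each $z=((x_{i})_{i=1}^{m},(y_{j})_{j=1}^{m})\in Z$ define $\ell_{z}(T):=\sum_{i=1}^{m}\sum_{j=1}^{m}T(e_{i},e_{j})\langle x_{i},y_{j}\rangle$. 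Because $T\mapsto T(e_{i},e_{j})$ is linear on $\mathcal{L}(^{2}\mathbb{R}^{m};\mathbb{R})$, each $\ell_{z}$ is a linear functional, and for fixed $T$ the map $z\mapsto\ell_{z}(T)$ is continuous; hence the supremum defining $f_{m,d}(T)=\sup_{z\in Z}|\ell_{z}(T)|$ is attained, finite (for instance $|\ell_{z}(T)|\le m^{2}\Vert T\Vert$, using $|\langle x_{i},y_{j}\rangle|\le 1$), and in particular $f_{m,d}$ is a well-defined real-valued function on $B_{\mathcal{L}(^{2}\mathbb{R}^{m};\mathbb{R})}$.

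For convexity I would argue directly: given $T,S$ and $\lambda\in[0,1]$, for every $z\in Z$ one has $|\ell_{z}(\lambda T+(1-\lambda)S)|\le\lambda|\ell_{z}(T)|+(1-\lambda)|\ell_{z}(S)|\le\lambda f_{m,d}(T)+(1-\lambda)f_{m,d}(S)$, and taking the supremum over $z$ yields $f_{m,d}(\lambda T+(1-\lambda)S)\le\lambda f_{m,d}(T)+(1-\lambda)f_{m,d}(S)$. In other words, each $|\ell_{z}|$ is a seminorm, hence convex, and a pointwise supremum of convex functions is convex.

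For continuity I would prove the stronger fact that $f_{m,d}$ is Lipschitz with respect to the norm of $\mathcal{L}(^{2}\mathbb{R}^{m};\mathbb{R})$. Using the uniform reverse triangle inequality $\bigl|\sup_{z}|\ell_{z}(T)|-\sup_{z}|\ell_{z}(S)|\bigr|\le\sup_{z}\bigl||\ell_{z}(T)|-|\ell_{z}(S)|\bigr|\le\sup_{z}|\ell_{z}(T-S)|$, and then estimating, for each $z$, $|\ell_{z}(T-S)|\le\sum_{i,j}|(T-S)(e_{i},e_{j})|\,|\langle x_{i},y_{j}\rangle|\le\sum_{i,j}|(T-S)(e_{i},e_{j})|\le m^{2}\Vert T-S\Vert$, where the middle step uses Cauchy--Schwarz ($|\langle x_{i},y_{j}\rangle|\le\Vert x_{i}\Vert_{2}\Vert y_{j}\Vert_{2}=1$) and the last uses that each $e_{i}$ is a unit vector of $\mathbb{R}^{m}$ for the sup norm. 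This gives $|f_{m,d}(T)-f_{m,d}(S)|\le m^{2}\Vert T-S\Vert$, hence continuity.

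There is essentially no serious obstacle here — the lemma is, as stated, straightforward — but the one point deserving a line of care is the passage through the difference of two suprema (the inequality $\bigl|\sup_{z}|\ell_{z}(T)|-\sup_{z}|\ell_{z}(S)|\bigr|\le\sup_{z}|\ell_{z}(T-S)|$), which one must apply uniformly over $Z$ rather than at a single optimizing $z$. Alternatively, since $(z,T)\mapsto\ell_{z}(T)$ is jointly continuous and $Z$ is compact, continuity of $f_{m,d}$ is an immediate instance of Berge's maximum theorem; I would prefer to record the explicit Lipschitz bound above, as it is shorter and fully self-contained.
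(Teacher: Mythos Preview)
Your argument is correct and complete: recognizing $f_{m,d}$ as a pointwise supremum over a compact index set of absolute values of linear functionals immediately yields convexity, and your Lipschitz estimate $|f_{m,d}(T)-f_{m,d}(S)|\le m^{2}\Vert T-S\Vert$ gives continuity. The paper itself omits the proof entirely, declaring the lemma ``straightforward,'' so there is nothing to compare against; what you have written is exactly the kind of routine verification the authors had in mind.
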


Since $f_{m,n}$ is continuous and convex and $B_{\mathcal{L}(^{2}%
\mathbb{R}^{m},\mathbb{R}^{n};\mathbb{R})}$ is convex and compact {we have the
following result}:

\begin{proposition}
For all positive integers $m,d$ we have%
\[
K_{G}^{(m)}(d)=\max_{T\in\mathcal{C}}\left(  \underset{\{x_{i}\}_{i=1}%
^{m},\{y_{j}\}_{j=1}^{m}\subset\mathbb{S}^{d-1}}{\max}\sum_{i=1}^{m}\sum
_{j=1}^{m}T(e_{i},e_{j})\langle x_{i},y_{j}\rangle\right)
\]
and%
\[
K_{G}(d)=\sup_{m}\left(  \max_{T\in\mathcal{C}}\underset{\{x_{i}\}_{i=1}%
^{m},\{y_{j}\}_{j=1}^{m}\subset\mathbb{S}^{d-1}}{\max}\sum_{i=1}^{m}\sum
_{j=1}^{m}T(e_{i},e_{j})\langle x_{i},y_{j}\rangle\right)  .
\]

\end{proposition}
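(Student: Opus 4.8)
The plan is to derive this from the preceding Proposition \ref{arrmax} together with the structural facts already assembled about $\mathcal{C}$. First I would observe that the problem of computing $K_{G}^{(m)}(d)$ has already been reformulated: by the displayed identity immediately preceding the Lemma, one has
\[
K_{G}^{(m)}(d)=\sup_{\Vert T\Vert\leq1}f_{m,d}(T),
\]
where $f_{m,d}$ is the function defined above. By the Lemma just quoted, $f_{m,d}$ is continuous and convex on $B_{\mathcal{L}(^{2}\mathbb{R}^{m};\mathbb{R})}$, which is a nonempty convex compact subset of the finite-dimensional space $\mathcal{L}(^{2}\mathbb{R}^{m})$. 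Hence the supremum defining $K_{G}^{(m)}(d)$ is attained, so it is in fact a maximum.

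Next I would invoke Proposition \ref{arrmax} verbatim, with $n=m$, $f=f_{m,d}$: since $f_{m,d}$ is convex and continuous on $B_{\mathcal{L}(^{2}\mathbb{R}^{m})}$, we get
\[
\max_{T\in B_{\mathcal{L}(^{2}\mathbb{R}^{m})}}f_{m,d}(T)=\max\{f_{m,d}(T):T\in\mathcal{C}\},
\]
where $\mathcal{C}=\mathcal{C}_{2,m}$ is the finite set of extreme points produced by Theorem \ref{arrimp}. Combining this with the reformulation of $K_{G}^{(m)}(d)$ yields
\[
K_{G}^{(m)}(d)=\max_{T\in\mathcal{C}}f_{m,d}(T)=\max_{T\in\mathcal{C}}\left(\underset{\{x_{i}\}_{i=1}^{m},\{y_{j}\}_{j=1}^{m}\subset\mathbb{S}^{d-1}}{\max}\left|\sum_{i=1}^{m}\sum_{j=1}^{m}T(e_{i},e_{j})\langle x_{i},y_{j}\rangle\right|\right),
\]
which is the first displayed formula (the absolute value inside may be dropped by replacing $x_i$ with $-x_i$ if necessary, so the two forms agree). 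The second formula is then immediate: since $K_{G}(d):=\sup_{m}K_{G}^{(m)}(d)$ by definition, one simply takes the supremum over $m$ of the expression just obtained.

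The only point requiring a little care — and the closest thing to an obstacle — is the interchange of the roles of ``$\sup$'' and ``$\max$'' over $B_{\mathcal{L}(^{2}\mathbb{R}^{m})}$: one must confirm that $f_{m,d}$ genuinely extends to a convex continuous function on the whole unit ball (not merely on extreme points), so that Proposition \ref{arrmax} applies. This is exactly the content of the Lemma preceding the statement, so no real work remains; continuity of $f_{m,d}$ follows because it is a supremum of the continuous functions $T\mapsto|\sum_{i,j}T(e_i,e_j)\langle x_i,y_j\rangle|$ over the compact set $(\mathbb{S}^{d-1})^{m}\times(\mathbb{S}^{d-1})^{m}$, and convexity because each such function is convex (it is the absolute value of a linear functional of $T$) and a supremum of convex functions is convex. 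With these observations in place the proposition follows by direct substitution, with no further computation.
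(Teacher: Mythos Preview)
Your proposal is correct and follows essentially the same approach as the paper: the paper's ``proof'' consists of the single sentence preceding the proposition, observing that since $f_{m,d}$ is continuous and convex and the unit ball is convex and compact, the result follows (implicitly via Proposition~\ref{arrmax}). You have simply spelled out the details of this argument, including the justification for dropping the absolute value and the reasons behind the continuity and convexity that the paper leaves as ``straightforward.''
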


Since $\mathcal{C}$ is finite and fully determined, the task reduces to
calculate%
\[
\underset{\{x_{i}\}_{i=1}^{m},\{y_{j}\}_{j=1}^{m}\subset\mathbb{S}^{d-1}}%
{\max}\sum_{i=1}^{m}\sum_{j=1}^{m}T_{0}(e_{i},e_{j})\langle x_{i},y_{j}\rangle
\]
for all $T_{0}\in\mathcal{C}$ and this can be easily calculated by the
Lagrange Multipliers method.  

\bigskip
%\textbf{Acknowledgment} We thank Prof. Nicolas Caro for giving us a simplified
%proof of the step (i)$\Rightarrow$(ii) of Proposition \ref{constbas2}. We also
%thank Prof. Marcus Vin\'{\i}cius de Medeiros Wanderley for fruitful
%conversations on the subject of this paper. Special thanks to Prof. Ricardo
%Burity for a careful revision of the paper and for important suggestions.

\end{document}